\newtheorem{theorem}{Theorem}[section]
\newtheorem{proposition}[theorem]{Proposition}
\newtheorem{corollary}[theorem]{Corollary}
\newtheorem{example}[theorem]{Example}
\newtheorem{remark}[theorem]{Remark}
\newtheorem{lemma}[theorem]{Lemma}
\newtheorem{final remark}[theorem]{Final Remark}
\newtheorem{definition}[theorem]{Definition}
\begin{document}

\title{Duality theory for generalized summing linear operators}
\author{Geraldo Botelho\thanks{Supported by CNPq Grant
304262/2018-8 and Fapemig Grant PPM-00450-17.\newline 2020 Mathematics Subject Classification: 47B10, 47L20, 46B45, 47B37, 46B10.\newline Keywords: Banach operator ideals, summing operators, vector-valued sequence spaces.} \, and  Jamilson R. Campos}
\date{}
\maketitle

\begin{abstract} Generalizing classical results of the theory of absolutely summing operators, in this paper we characterize the duals of a quite large class of Banach operator ideals defined or characterized by the transformation of vector-valued sequences.
\end{abstract}

\section{Introduction and background}

In the theory of operator ideals, classes of operators that are defined or characterized by the transformation of vector-valued sequences play a major role. The main reason is the striking success of the ideal of absolutely summing operators, which goes back to the works of Grothendieck \cite{grothen56}. A large amount of research has been devoted to this kind of operator ideals, we refer the reader to the classical monographs \cite{df,djt,pietschlivro} and for recent developments we refer to \cite{nacib, nacib1, angulo, bayart, bayart1, popa}.

In \cite{botelhocampos} we provided an unifying approach to this type of Banach operator ideals. The main tool is the notion of sequence classes, which encompasses the most used classes used in the theory, such as $p$-summable, weakly $p$-summable, almost inconditionally summable sequences, and many others (cf. Example \ref{exsc}).  This approach recovers several well studied ideals and gives rise to new classes, see, e.g., \cite{botelhocampossantos, diana}. The range of applications of this quite general framework has been recently expanded in the linear and nonlinear settings, see, e.g, \cite{botelhofreitas, joilsonFab, joilsonFab2}.

A central issue in the theory of operator ideals is the duality theory: knowing the description of the operators belonging to a given operator ideal $\cal I$, can one describe the operators whose adjoints belong to $\cal I$ and the adjoints of the operators belonging to $\cal I$? Cornerstones in this line of investigation are Schauder's Theorem for compact operators, Gantmacher's Theorem for weakly compact operators and the contributions of J. S. Cohen \cite{cohen73} and H. Apiola \cite{apiola}, who answered the questions above for the ideals of absolutely $p$-summing and absolutely $(p,q)$-summing operators.

Our aim in this paper is to start the duality theory for the operator ideals defined or characterized by the transformation of vector-valued sequences within the framework of sequence classes introduced in \cite{botelhocampos}. In Section 2 we define the dual $X^{\rm dual}$ of a sequence class $X$ and prove when $X^{\rm dual}(E') = X(E)'$. Given sequence classes $X$ and $Y$, in Section 3 we investigate the following implications: an operator is $(X;Y)$-summing if and only if its adjoint is $(Y^{\rm dual}; X^{\rm dual})$-summing; the adjoint of an operator is $(X;Y)$-summing if and only if the operator is $(Y^{\rm dual}; X^{\rm dual})$-summing.

The letters $E$ and $F$ shall denote Banach spaces over $\mathbb{K} = \mathbb{R}$ or $\mathbb{C}$. The closed unit ball of $E$ is denoted by $B_E$ and its topological dual by $E'$. The symbol $E \stackrel{1}{\hookrightarrow} F$ means that $E$ is a linear subspace of $F$ and $\|x\|_F \leq \|x\|_E$ for every $x \in E$. For $1 \le p < \infty$, the number $p^*$ is the conjugated index of $p$, that is, $1= 1/p + 1/{p^*}$. The symbol $x\cdot e_j$ indicates the sequence $(0,\ldots, 0,x,0, 0,\ldots )$, where $x$ appears at the $j$-th coordinate.

By ${\cal L}(E;F)$ we denote the Banach space of continuous linear operators $T \colon E \longrightarrow F$ endowed with the usual sup norm.
By $\widehat{T}$ we mean the induced operator
$$\widehat{T} \colon E^\mathbb{N} \longrightarrow F^\mathbb{N}~,~\widehat{T}((x_j)_{j=1}^\infty) = (T(x_j))_{j=1}^\infty.$$
Restrictions of $\widehat{T}$ to subspaces of $E^{\mathbb{N}}$ are still denoted by $\widehat{T}$. 

According to \cite{botelhocampos}, a {\it sequence class} is a rule $X$ that assigns to each Banach space $E$ a Banach space $X(E)$ of $E$-valued sequences, that is $X(E)$ is a vector subspace of $E^{\mathbb{N}}$ with the coordinatewise operations, such that:\\
(i) $c_{00}(E) \subseteq X(E) \stackrel{1}{\hookrightarrow}  \ell_\infty(E)$ for every Banach space $E$.\\
(ii) $\|x \cdot e_j\|_{X(\mathbb{E})}= \|x\|_E$  for all $E$, $x \in E$ and $j \in \mathbb{N}$.
		
The sequence class environment is an abstract tool designed to deal with classes of linear operators that improve the summability of sequences/the convergence of series: for sequence classes $X$ and $Y$, a linear operator $T \in \mathcal{L}(E;F)$ is called $(X;Y)$-summing if $(T(x_j))_{j=1}^\infty \in Y(F)$ whenever $(x_j)_{j=1}^\infty \in X(E)$. In this case we write $T \in \mathcal{L}_{X;Y}(E;F)$ and define
$$\|T\|_{X;Y} := \|\widehat{T}\colon X(E) \longrightarrow Y(F) \|_{\mathcal{L}(X(E); Y(F))}.$$
Conditions under which $(\mathcal{L}_{X;Y}, \|\cdot\|_{X;Y})$ is a Banach operator ideal are established in \cite{botelhocampos}.

We say that a sequence class $X$ is:\\
$\bullet$ \emph{linearly stable} if  $\mathcal{L}_{X;X}(E;F) = \mathcal{L}(E;F)$ isometrically for all Banach spaces $E$ and $F$.\\
$\bullet$ \emph{finitely determined} if for every sequence $(x_j)_{j=1}^\infty \in E^{\mathbb{N}}$, we have $(x_j)_{j=1}^\infty \in X(E)$ if and only if $\displaystyle\sup_k \left\|(x_j)_{j=1}^k  \right\|_{X(E)} < +\infty$ and, in this case,
$$\left\|(x_j)_{j=1}^\infty  \right\|_{X(E)} = \sup_k \left\|(x_j)_{j=1}^k  \right\|_{X(E)}. $$
$\bullet$ {\it finitely dominated} if one of the following conditions hold:\\
(I) There exists a finitely determined sequence class $Y$ such that, for every Banach space $E$, $X(E)$ is a closed subspace of $Y(E)$ and, for every sequence $(x_j)_{j=1}^\infty \in Y(E)$, $(x_j)_{j=1}^\infty \in X(E)$ if and only if $\displaystyle\lim_k \|(x_j)_{j=k}^\infty\|_{Y(E)} = 0$. In this case we write $X < Y$.\\
(II) There exists a finitely determined sequence class $Y$ such that, for every Banach space $E$, $X(E)$ is a closed subspace of $Y(E)$ and, for every sequence $(x_j)_{j=1}^\infty \in Y(E)$, $(x_j)_{j=1}^\infty \in X(E)$ if and only if $\displaystyle\lim_{k,l} \|(x_j)_{j=k}^l\|_{Y(E)} = 0$. In this case we write $X \prec Y$.

For the benefit of the reader, we list the most commonly used sequence classes.

\begin{example}\label{exsc}\rm
Letting $X(E)$ be any of the spaces listed below, the rule $E \mapsto X(E)$ is a linearly stable sequence class:

\medskip

\noindent $\bullet$ $\ell_\infty(E)$ = bounded $E$-valued sequences with the sup norm.\\
$\bullet$ $c_0(E)$ = norm null $E$-valued sequences with the sup norm.\\
$\bullet$ $c_0^w(E)$ = weakly null $E$-valued sequences with the sup norm.\\
$\bullet$ $\ell_p(E) $ = absolutely $p$-summable $E$-valued sequences with the usual norm $\|\cdot\|_p$.\\
$\bullet$ $\ell_p^w(E)$ = weakly $p$-summable $E$-valued sequences with the norm
$$\|(x_j)_{j=1}^\infty\|_{w,p} = \sup_{\varphi \in B_{E'}}\|(\varphi(x_j))_{j=1}^\infty\|_p. $$
$\bullet$ $\ell_p^u(E) = \left\{(x_j)_{j=1}^\infty \in \ell_p^w(E) : \displaystyle\lim_k \|(x_j)_{j=k}^\infty\|_{w,p} = 0 \right\}$ with the norm inherited from $\ell_p^w(E)$ (unconditionally $p$-summable sequences, see \cite[8.2]{df}).\\
$\bullet$ ${\rm Rad}(E)$ = almost unconditionally summable $E$-valued sequences, in the sense of \cite[Chapter 12]{djt}, with the norm $\displaystyle\|(x_j)_{j=1}^\infty\|_{\rm Rad(E)} = \left(\int_0^1 \left\|\sum_{j=1}^\infty r_j(t) x_j \right\|^2 dt \right)^{1/2},$
where $(r_j)_{j=1}^\infty$ are the Rademacher functions.\\
$\bullet$ $\displaystyle{{\rm RAD}(E)} = \left\{(x_j)_{j=1}^\infty \in E^{\mathbb{N}}: \|(x_j)_{j=1}^\infty\|_{{\rm RAD}(E)}:= \sup_k \|(x_j)_{j=1}^k\|_{{\rm Rad}(E)}< +\infty\right\}$ \cite{BotelhoBlasco, blascoetal}.\\
$\bullet$ $\displaystyle\ell_p\langle E \rangle  = \left\{(x_j)_{j=1}^\infty \in E^{\mathbb{N}}: \|(x_j)_{j=1}^\infty\|_{\ell_p\langle E \rangle}:= \sup_{(\varphi_j)_{j=1}^\infty \in B_{\ell_{p^*}^w(E')}} \|(\varphi_j(x_j))_{j=1}^\infty\|_1< +\infty\right\}$  (Cohen strongly $p$-summable sequences, see, e.g., \cite{cohen73}).\\
$\bullet$ $\displaystyle\ell_p^{\rm mid}(E)  =  \left\{(x_j)_{j=1}^\infty \in E^{\mathbb{N}}: \|(x_j)_{j=1}^\infty\|_{mid,p} := \sup_{(\varphi_n)_{n=1}^\infty \in B_{\ell_p^w(E')}} \left(\sum_{n=1}^\infty \sum_{j=1}^\infty |\varphi_n(x_j)|^p\right)^{1/p}< +\infty\right\}$ (mid $p$-summable sequences, see, e.g.,  \cite{botelhocampossantos}).

The sequence classes $\ell_\infty(\cdot), \ell_p(\cdot), \ell_p^w(\cdot),  \ell_p\langle \,\cdot\, \rangle, \ell_p^{\rm mid}(\cdot)$ and ${\rm RAD}(\cdot)$ are finitely determined, and the sequences classes $\ell_p^u(\cdot)$ and ${\rm Rad}(\cdot)$ are finitely dominated because $\ell_p^u(\cdot) < \ell_p^w(\cdot)$ and ${\rm Rad}(\cdot) \prec {\rm RAD}(\cdot)$.
\end{example}

\section{The dual of a sequence class}

Some preparation is needed to define the dual of a sequence class.

\begin{definition}\rm
	A sequence class $X$ is \emph{spherically complete} if $(\lambda_jx_j)_{j=1}^\infty \in X(E)$ and
 $\|(\lambda_jx_j)_{j=1}^\infty \|_{X(E)} = \|(x_j)_{j=1}^\infty\|_{X(E)}$ whenever $(x_j)_{j=1}^\infty \in X(E)$ and $(\lambda_j)_{j=1}^\infty \in \mathbb{K}^\mathbb{N}$, with $|\lambda_j| = 1$ for every $j$.
\end{definition}

\begin{lemma}\label{lemaserie}
	Let $X$ be a spherically complete sequence class and $(x_j)_{j=1}^\infty \in E^\mathbb{N}$. Then the following sentences are equivalent:\\
{\rm (a)} The series $\sum_{j=1}^{\infty} \varphi_j(x_j)$ converges for every $(\varphi_j)_{j=1}^\infty \in X(E')$.\\
{\rm (b)} The series $\sum_{j=1}^{\infty} |\varphi_j(x_j)|$ converges for every $(\varphi_j)_{j=1}^\infty \in X(E')$.\\
		In this case,
	\[\sup_{(\varphi_j)_{j=1}^\infty \in  B_{X(E')}}\left|\sum_{j=1}^{\infty} \varphi_j(x_j)\right| = \sup_{(\varphi_j)_{j=1}^\infty \in  B_{X(E')}}\sum_{j=1}^{\infty} |\varphi_j(x_j)|.\]
\end{lemma}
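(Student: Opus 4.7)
The implication (b) $\Rightarrow$ (a) is immediate since absolute convergence implies convergence in $\mathbb{K}$, so the main content is the converse. My plan is to exploit spherical completeness via the standard unimodular-multiplier trick. Given $(\varphi_j)_{j=1}^\infty \in X(E')$, for each $j$ set $\lambda_j = \overline{\varphi_j(x_j)}/|\varphi_j(x_j)|$ when $\varphi_j(x_j)\neq 0$ and $\lambda_j=1$ otherwise, so that $|\lambda_j|=1$ and $\lambda_j\varphi_j(x_j)=|\varphi_j(x_j)|$ for every $j$. By spherical completeness, $(\lambda_j\varphi_j)_{j=1}^\infty \in X(E')$ with the same norm. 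Applying hypothesis (a) to this new sequence, the series
\[
\sum_{j=1}^\infty \lambda_j\varphi_j(x_j) \;=\; \sum_{j=1}^\infty |\varphi_j(x_j)|
\]
converges in $\mathbb{K}$; being a series of non-negative reals, its partial sums are monotone, so convergence here is exactly the assertion in (b).

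For the equality of suprema, let $A$ and $B$ denote the left- and right-hand sides respectively. The inequality $A\leq B$ is immediate from the triangle inequality applied to each partial sum and then passing to the limit (which exists by the already-proved equivalence). For $B\leq A$, I would fix $(\varphi_j)_{j=1}^\infty \in B_{X(E')}$ and choose unimodular scalars $\lambda_j$ as above. Spherical completeness gives $(\lambda_j\varphi_j)_{j=1}^\infty \in B_{X(E')}$, and then
\[
\sum_{j=1}^\infty |\varphi_j(x_j)| \;=\; \sum_{j=1}^\infty \lambda_j\varphi_j(x_j) \;=\; \left|\sum_{j=1}^\infty \lambda_j\varphi_j(x_j)\right| \;\leq\; A,
\]
so taking the supremum over $(\varphi_j)_{j=1}^\infty \in B_{X(E')}$ yields $B\leq A$.

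There is no real obstacle; the only point to watch is handling the zero coordinates when defining $\lambda_j$, which is why one sets $\lambda_j=1$ (or any unimodular value) in that case so that the sequence $(\lambda_j)_{j=1}^\infty$ is genuinely unimodular and spherical completeness applies. Everything else is a straightforward application of the definition of spherical completeness together with the fact that a monotone convergent sequence of reals is bounded by its limit.
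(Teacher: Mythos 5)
Your proposal is correct and follows essentially the same route as the paper: both prove (a) $\Rightarrow$ (b) and the inequality $B\le A$ by multiplying $(\varphi_j)_{j=1}^\infty$ by unimodular scalars chosen so that $\lambda_j\varphi_j(x_j)=|\varphi_j(x_j)|$, using spherical completeness to keep the modified sequence in $X(E')$ (and in its unit ball) and the triangle inequality for $A\le B$. Your explicit handling of the coordinates where $\varphi_j(x_j)=0$ is a minor refinement of the paper's argument, which leaves that case implicit.
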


\begin{proof} The implication
	(b) $\Rightarrow$ (a) and the corresponding inequality are immediate. Let us prove (a) $\Rightarrow$ (b). Let $(\psi_j)_{j=1}^\infty$ be the sequence defined by
	\begin{displaymath}
	\psi_j = \left\{ \begin{array}{l}
	\varphi_j,\ \mathrm{if}\ \varphi_j(x_j) \geq 0\\
	- \varphi_j,\ \mathrm{if}\ \varphi_j(x_j) < 0,
	\end{array} \right.
	\end{displaymath}
	in the real case, and $\psi_j = \varphi_j e^{-i\theta_j}$ in the complex case, where $\theta_j$ is the principal argument of $\varphi_j(x_j)$.
	In both cases, as $X$ is spherically complete, $(\psi_j)_{j=1}^\infty \in X(E')$ and  $(\psi_j)_{j=1}^\infty \in B_{X(E')}$ if $(\varphi_j)_{j=1}^\infty \in B_{X(E')}$. So,
	\[\sum_{j=1}^{\infty} |\varphi_j(x_j)| = \sum_{j=1}^{\infty} \psi_j(x_j) \]
	and we obtain (b). Moreover,
	\[\sup_{(\varphi_j)_{j=1}^\infty \in  B_{X(E')}}\sum_{j=1}^{\infty} |\varphi_j(x_j)| = \sup_{(\psi_j)_{j=1}^\infty \in  B_{X(E')}}\sum_{j=1}^{\infty} \psi_j(x_j) \le \sup_{(\psi_j)_{j=1}^\infty \in  B_{X(E')}}\left|\sum_{j=1}^{\infty} \psi_j(x_j)\right|.\]
\end{proof}

Now we are ready to define the dual of a sequence class.

\begin{definition}\rm
	The {\it dual} of a sequence class $X$ is a rule that assigns to each  Banach space $E$ the following space of $E$-valued sequences:
	\begin{equation*}
		X^{\rm dual}(E) = \left\{(x_j)_{j=1}^\infty\ \mathrm{in\ } E: \sum_{j=1}^\infty \varphi_j(x_j)\ \mathrm{converges\ } \text{for every}\ (\varphi_j)_{j=1}^\infty\ \mathrm{in\ } X(E')\right\}.
	\end{equation*}
\end{definition}

We proceed to investigate under what conditions $X^{\rm dual}$ a sequence class.
It is immediate that $X^{\rm dual}(E)$ is a linear space of $E$-valued sequences with the usual coordinatewise operations and that $c_{00}(E) \subseteq X^{\rm dual}(E)$ for every $E$. 

\begin{proposition}
	If $X$ is a spherically complete sequence class, then the expression
	\begin{equation*}
	\|(x_j)_{j=1}^\infty\|_{X^{\rm dual}(E)} :=  \sup_{(\varphi_j)_{j=1}^\infty \in B_{X(E')}}   \sum_{j=1}^\infty |\varphi_j(x_j)|
	\end{equation*}
	defines a complete norm on $X^{\rm dual}(E)$ and
	\begin{equation}\label{ellinfty}
	 X^{\rm dual}(E) \stackrel{1}{\hookrightarrow} \ell_\infty(E)\ \text{ for every Banach space } E.
	\end{equation}
\end{proposition}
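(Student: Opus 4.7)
My plan has three parts: finiteness of the supremum, the norm/embedding axioms, and completeness.

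First, I would check that the expression defining $\|\cdot\|_{X^{\rm dual}(E)}$ is finite on $X^{\rm dual}(E)$. Fix $(x_j)_{j=1}^\infty \in X^{\rm dual}(E)$ and, for each $n$, consider the linear functional $S_n \colon X(E') \to \mathbb{K}$ given by $S_n((\varphi_j)_{j=1}^\infty)=\sum_{j=1}^n \varphi_j(x_j)$. Property (i) of a sequence class gives $X(E') \stackrel{1}{\hookrightarrow} \ell_\infty(E')$, so $\|\varphi_j\|_{E'} \leq \|(\varphi_j)_{j=1}^\infty\|_{X(E')}$ for every $j$, and hence $S_n$ is continuous with norm at most $\sum_{j=1}^n \|x_j\|_E$. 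By hypothesis the sequence $(S_n)_n$ converges pointwise on $X(E')$, so the Banach--Steinhaus theorem yields a uniform bound on $\|S_n\|$; consequently $\sup_{(\varphi_j) \in B_{X(E')}}\bigl|\sum_j \varphi_j(x_j)\bigr| < \infty$, and Lemma~\ref{lemaserie} (available since $X$ is spherically complete) converts this into the desired finiteness of $\sup_{(\varphi_j) \in B_{X(E')}} \sum_j |\varphi_j(x_j)|$.

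Second, I would verify the norm axioms, which are routine once one observes the key test sequences: for any $\varphi \in B_{E'}$ and $j \in \mathbb{N}$, property (ii) says $\varphi \cdot e_j \in B_{X(E')}$. Non-degeneracy then follows because $\|(x_j)\|_{X^{\rm dual}(E)}=0$ forces $\varphi(x_j)=0$ for every $j$ and every $\varphi \in B_{E'}$, hence each $x_j=0$. Using the same test sequences together with the Hahn--Banach theorem (pick $\varphi \in B_{E'}$ with $\varphi(x_j) = \|x_j\|_E$), we obtain $\|x_j\|_E \leq \|(x_j)\|_{X^{\rm dual}(E)}$ for every $j$, which simultaneously shows $X^{\rm dual}(E) \subseteq \ell_\infty(E)$ and gives the $\stackrel{1}{\hookrightarrow}$ asserted in \eqref{ellinfty}.

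Third comes completeness, which I expect to be the main obstacle. Let $\bigl((x_j^{(n)})_{j=1}^\infty\bigr)_n$ be Cauchy in $X^{\rm dual}(E)$. By the embedding just established it is also Cauchy in $\ell_\infty(E)$, so $x_j := \lim_n x_j^{(n)}$ exists in $E$ for every $j$. Fix $\varepsilon>0$ and $N$ with $\|(x_j^{(n)}-x_j^{(m)})\|_{X^{\rm dual}(E)}<\varepsilon$ for all $n,m \geq N$. For every finite $K$, every $(\varphi_j) \in B_{X(E')}$ and every $n,m \geq N$, one has $\sum_{j=1}^K |\varphi_j(x_j^{(n)}-x_j^{(m)})| < \varepsilon$; letting $m \to \infty$ in this finite sum (continuity of each $\varphi_j$) and then $K \to \infty$ gives $\sum_{j=1}^{\infty}|\varphi_j(x_j^{(n)}-x_j)| \leq \varepsilon$ for every $(\varphi_j) \in B_{X(E')}$ and every $n \geq N$. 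By homogeneity this extends to all $(\varphi_j) \in X(E')$, so $\sum_j \varphi_j(x_j^{(n)}-x_j)$ converges absolutely. In particular $(x_j^{(n)}-x_j)_{j=1}^\infty \in X^{\rm dual}(E)$, and since $X^{\rm dual}(E)$ is a vector space this yields $(x_j)_{j=1}^\infty \in X^{\rm dual}(E)$, while taking the supremum over $(\varphi_j) \in B_{X(E')}$ gives $\|(x_j^{(n)})-(x_j)\|_{X^{\rm dual}(E)} \leq \varepsilon$ for $n \geq N$. The delicate point is precisely this double passage to the limit: I must keep the sum finite while letting $m\to\infty$, only then let $K\to\infty$, and finally invoke Lemma~\ref{lemaserie} to pass from absolute convergence of the estimate to actual membership in $X^{\rm dual}(E)$.
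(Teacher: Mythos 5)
Your proposal is correct and follows the same overall skeleton as the paper's proof: establish finiteness of the supremum via a Baire-category tool, get the norm axioms and the embedding \eqref{ellinfty} from the test sequences $\varphi\cdot e_j$ with $\|\varphi\cdot e_j\|_{X(E')}=\|\varphi\|_{E'}$, and deduce completeness from the fact that $X^{\rm dual}(E)$-convergence dominates coordinatewise convergence. The one genuine difference is the tool used for finiteness: the paper applies a closed graph argument to the operator $T_{(x_j)_{j=1}^\infty}\colon X(E')\to\ell_1$, $(\varphi_j)_{j=1}^\infty\mapsto(\varphi_j(x_j))_{j=1}^\infty$ (whose target is $\ell_1$ precisely because of Lemma \ref{lemaserie}), while you apply Banach--Steinhaus to the partial-sum functionals $S_n$ and only afterwards invoke Lemma \ref{lemaserie} to replace $\sup\bigl|\sum_j\varphi_j(x_j)\bigr|$ by $\sup\sum_j|\varphi_j(x_j)|$. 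Both use the completeness of $X(E')$ and spherical completeness in the same essential way; your route avoids checking closedness of a graph and gives the uniform bound more directly, whereas the paper's operator formulation is slightly more compact. You also write out the completeness argument that the paper dismisses as straightforward, and your double limit ($m\to\infty$ in a finite sum, then $K\to\infty$, then homogeneity) is exactly the right way to do it; the only small inefficiency is the final appeal to Lemma \ref{lemaserie}, which is unnecessary there, since absolute convergence of the scalar series $\sum_j\varphi_j(x_j^{(n)}-x_j)$ already yields convergence and hence membership in $X^{\rm dual}(E)$.
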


\begin{proof} Given $(x_j)_{j=1}^\infty \in  X^{\rm dual}(E)$, as $X$ is spherically complete, thanks to Lemma \ref{lemaserie} the series in the definition of  $X^{\rm dual}(E)$ can replaced with the series $\sum_{j=1}^\infty |\varphi_j(x_j)|$. A standard closed graph argument gives the continuity of the operator
\begin{align*}
T_{(x_j)_{j=1}^\infty}\colon X(E') \longrightarrow \ell_1~,~ T_{(x_j)_{j=1}^\infty}((\varphi_j)_{j=1}^\infty) = (\varphi_j(x_j))_{j=1}^\infty.
\end{align*}
Thus, the supremum in the definition of the dual norm is finite and the norm axioms follow easily. Since $c_{00}(E') \subseteq X(E')$ and $\|\varphi\| = \|\varphi\cdot e_j\|_{X(E')}$ for all $\varphi \in E'$ and $j \in \mathbb{N}$,
\[ \|x_j\| = \sup_{\varphi \in B_{E'}} |\varphi(x_j)| \le \sup_{(\varphi_j)_{j=1}^\infty \in B_{X(E')}}   \sum_{j=1}^\infty |\varphi_j(x_j)| = \|(x_j)_{j=1}^\infty\|_{X^{\rm dual}(E)}, \]
therefore $X^{\rm dual}(E) \stackrel{1}{\hookrightarrow} \ell_\infty(E)$. The completeness of $X^{\rm dual}(E)$ follows by a straightforward argument using the completeness of $E$ and the fact, guaranteed by \eqref{ellinfty}, that convergence in $X^{\rm dual}(E)$ implies coordinatewise convergence.
\end{proof}

\begin{proposition}\label{proporp}
Let $X$ be a spherically complete sequence class. Then:\\
{\rm (a)} $X^{\rm dual}$ is a finitely determined and spherically complete sequence class.\\
{\rm (b)} If $X$ is linearly stable, then so is $X^{\rm dual}$.
\end{proposition}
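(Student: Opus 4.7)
The plan is to verify each property of $X^{\rm dual}$ in turn, leaning on the previous proposition (which already supplies the norm, completeness and the embedding $X^{\rm dual}(E) \stackrel{1}{\hookrightarrow} \ell_\infty(E)$) together with Lemma \ref{lemaserie}.

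For (a), I would first close the remaining sequence-class axiom $\|x \cdot e_j\|_{X^{\rm dual}(E)} = \|x\|_E$. The inequality $\|x\|_E \le \|x \cdot e_j\|_{X^{\rm dual}(E)}$ is immediate from \eqref{ellinfty}, while the reverse follows by noting that for any $(\varphi_k) \in B_{X(E')}$,
\[
\sum_{k=1}^\infty |\varphi_k((x \cdot e_j)_k)| = |\varphi_j(x)| \le \|\varphi_j\|\cdot\|x\| \le \|x\|,
\]
since $X(E') \stackrel{1}{\hookrightarrow} \ell_\infty(E')$ forces $\|\varphi_j\|\le 1$. Spherical completeness is then essentially formal: if $|\lambda_j|=1$ and $(x_j)\in X^{\rm dual}(E)$, then for every $(\varphi_j)\in X(E')$ Lemma \ref{lemaserie} ensures $\sum|\varphi_j(x_j)|<\infty$, whence $\sum \varphi_j(\lambda_j x_j)$ converges absolutely, and the identity $\sum|\varphi_j(\lambda_j x_j)|=\sum|\varphi_j(x_j)|$ preserves the norm.

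The key computational step is to observe that for the truncated sequence $(x_1,\dots,x_k,0,0,\dots)$,
\[
\|(x_j)_{j=1}^k\|_{X^{\rm dual}(E)} = \sup_{(\varphi_j)\in B_{X(E')}} \sum_{j=1}^k |\varphi_j(x_j)|.
\]
Swapping the order of the suprema then yields $\sup_k\|(x_j)_{j=1}^k\|_{X^{\rm dual}(E)} = \|(x_j)_{j=1}^\infty\|_{X^{\rm dual}(E)}$ whenever the latter is finite. Conversely, if $\sup_k\|(x_j)_{j=1}^k\|_{X^{\rm dual}(E)} = M < +\infty$, then for every $(\varphi_j)\in X(E')$ and every $k$ we get $\sum_{j=1}^k|\varphi_j(x_j)| \le M\,\|(\varphi_j)\|_{X(E')}$, so $\sum \varphi_j(x_j)$ converges absolutely, $(x_j)\in X^{\rm dual}(E)$, and its norm is at most $M$. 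This proves the finitely-determined property.

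For (b), assume $X$ is linearly stable and pick $T \in \mathcal{L}(E;F)$ together with $(x_j) \in X^{\rm dual}(E)$. Given any $(\psi_j) \in X(F')$, linear stability of $X$ applied to the adjoint $T' \in \mathcal{L}(F';E')$ yields $(T'\psi_j) \in X(E')$ with $\|(T'\psi_j)\|_{X(E')} \le \|T\|\cdot\|(\psi_j)\|_{X(F')}$. Since $\psi_j(Tx_j)=(T'\psi_j)(x_j)$, we obtain $(Tx_j)\in X^{\rm dual}(F)$ and
\[
\|\widehat{T}(x_j)\|_{X^{\rm dual}(F)} \le \|T\|\cdot \|(x_j)\|_{X^{\rm dual}(E)},
\]
so $\|\widehat{T}\| \le \|T\|$ as an operator $X^{\rm dual}(E)\to X^{\rm dual}(F)$. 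The reverse inequality $\|\widehat{T}\| \ge \|T\|$ follows by testing on sequences $x\cdot e_j$ and using the identity $\|x\cdot e_j\|_{X^{\rm dual}(E)} = \|x\|_E$ established in (a).

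The only subtle point, and the one I expect to need the most care in writing up, is the identification of $\|(x_j)_{j=1}^k\|_{X^{\rm dual}(E)}$ as the restricted supremum above; once this and the permissibility of the sup-swap are nailed down, the remaining verifications are bookkeeping around Lemma \ref{lemaserie} and the linear stability of $X$.
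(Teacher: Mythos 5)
Your proposal is correct and follows essentially the same route as the paper: verify $\|x\cdot e_j\|_{X^{\rm dual}(E)}=\|x\|_E$ via the embedding into $\ell_\infty(E)$ and the unit-ball bound on $\|\varphi_j\|$, obtain the finitely determined property by identifying the norm of truncations and swapping suprema, and get linear stability of $X^{\rm dual}$ through the adjoint $T'$ together with the norm identity on $x\cdot e_j$. The only (harmless) deviation is your treatment of spherical completeness via $|\varphi_j(\lambda_j x_j)|=|\varphi_j(x_j)|$, where the paper instead transfers the unimodular scalars onto $(\varphi_j)_{j=1}^\infty$ using the spherical completeness of $X$; both arguments are valid.
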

\begin{proof} (a) Let $j \in \mathbb{N}$, $x \in E$ and $\varphi \in E'$ be given. As $x \cdot e_j \in X^{\rm dual}(E)$ and $\varphi \cdot e_j \in X(E')$, writing $(x_k)_{k=1}^\infty = x\cdot e_j$ and $(\varphi_k)_{k=1}^\infty = \varphi \cdot e_j$, we have $x_j=x$, $\varphi_j = \varphi$ and $x_k = 0$ and $\varphi_k = 0$ for $k \neq j$. Thus
	\begin{equation*} \|x\| = \sup_{\varphi \in B_{E'}}|\varphi(x)| \le  \sup_{(\varphi_k)_{k=1}^\infty \in B_{X(E')}}\sum_{k=1}^\infty|\varphi_k(x_k)| = \|(x_k)_{k=1}^\infty\|_{X^{\rm dual}(E)} = \|x \cdot e_j \|_{X^{\rm dual}(E)}.
	\end{equation*}
	On the other hand,
	\begin{align*}
	\|x \cdot e_j \|_{X^{\rm dual}(E)} & = \|(x_k)_{k=1}^\infty\|_{X^{\rm dual}(E)}
	= \sup_{(\varphi_k)_{k=1}^\infty \in B_{X(E')}}\sum_{k=1}^\infty|\varphi_k(x_k)| \\
	& \leq   \sup_{(\varphi_k)_{k=1}^\infty \in B_{X(E')}}\sum_{k=1}^\infty\|\varphi_k\|\cdot \|x_k\|  =\sup_{(\varphi_k)_{k=1}^\infty \in B_{X(E')}} \|\varphi_j\|\cdot \|x\| = \|x\|,
	\end{align*} proving that $\|x \cdot e_j \|_{X^{\rm dual}(E)} = \|x\|_E$. This was all that was left to prove that $X^{\rm dual}$ is a sequence class. For a sequence $(x_j)_{j='}^\infty \in E^{\mathbb{N}}$,
	\begin{align*}
	\sup_k \|(x_j)_{j=1}^k\|_{X^{\rm dual}(E)} & = \sup_k \sup_{(\varphi_j)_{j=1}^\infty \in B_{X(E')}} \sum_{j=1}^{k}\left\vert \varphi_j(x_j)\right\vert \\
	& = \sup_{(\varphi_j)_{j=1}^\infty \in B_{X(E')}}  \sup_k \sum_{j=1}^{k}\left\vert \varphi_j(x_j)\right\vert = \|(x_j)_{j=1}^\infty\|_{X^{\rm dual}(E)},
	\end{align*}
from which it follows that $X^{\rm dual}$ is finitely determined.

Let $(x_j)_{j=1}^\infty \in X^{\rm dual}(E)$, $(\lambda_j)_{j=1}^\infty \in \mathbb{K}^\mathbb{N}$ with $|\lambda_j| = 1$ for every $j \in \mathbb{N}$, and $(\varphi_j)_{j=1}^\infty \in X(E')$ be given. Since $(\lambda_j\varphi_j)_{j=1}^\infty \in X(E')$,
	\[
	\sum_{j=1}^{\infty}\left\vert \varphi_j(\lambda_jx_j)\right\vert = \sum_{j=1}^{\infty}\left\vert (\lambda_j\varphi_j)(x_j)\right\vert  < \infty,
	\]
	what gives us that $(\lambda_jx_j)_{j=1}^\infty \in X^{\rm dual}(E)$. And since $\|(\lambda_j\varphi_j)_{j=1}^\infty\|_{X(E')} = \|(\varphi_j)_{j=1}^\infty\|_{X(E')}$,
	\begin{align*}
	\|(\lambda_jx_j)_{j=1}^\infty\|_{X^{\rm dual}(E)} & = \sup_{(\varphi_j)_{j=1}^\infty \in B_{X(E')}} \sum_{j=1}^{n}\left\vert \varphi_j(\lambda_jx_j)\right\vert\\
	&=  \sup_{(\varphi_j)_{j=1}^\infty \in B_{X(E')}} \sum_{j=1}^{n}\left\vert (\lambda_j\varphi_j)(x_j)\right\vert = \|(x_j)_{j=1}^\infty\|_{X^{\rm dual}(E)}.
	\end{align*}
	So, $X^{\rm dual}$ is spherically complete.

\noindent (b) Let $T \in {\cal L}(E;F)$ and $(\varphi_j)_{j=1}^\infty \in X(F')$ be given. By the linear stability of $X$ we have  $(T'(\varphi_j))_{j=1}^\infty = (\varphi_j \circ T)_{j=1}^\infty\in X(E')$ and
\[ \left\Vert \widehat{T'} ((\varphi_j)_{j=1}^\infty)\right\Vert_{X(E')} = \left\Vert \left(T' (\varphi_j)\right)_{j=1}^\infty\right\Vert_{X(E')} \le \left\Vert T'\right\Vert\cdot\|(\varphi_j)_{j=1}^\infty\|_{X(F')},\]
what gives $\|(\varphi_j \circ T)_{j=1}^\infty\|_{X(E')} \le \|T\|\cdot \|(\varphi_j)_{j=1}^\infty\|_{X(F')}$. So, for every $(x_j)_{j=1}^\infty \in X^{\rm dual}(E)$,
\begin{align*}
\left\Vert \widehat{T}((x_j)_{j=1}^\infty)\right\Vert_{X^{\rm dual}(F)} & = \left\Vert \left(T(x_j)\right)_{j=1}^\infty\right\Vert_{X^{\rm dual}(F)} = \sup_{(\varphi_j)_{j=1}^\infty \in  B_{X(F')}} \sum_{j=1}^\infty \left\vert  \varphi_j (T(x_j)) \right\vert \\
& = \|T\|\cdot \sup_{(\varphi_j)_{j=1}^\infty \in  B_{X(F')}}  \sum_{j=1}^\infty \left\vert \left(\frac{\varphi_j \circ T}{\|T\|}\right) (x_j) \right\vert \\
& \le \|T\|\cdot \sup_{(\psi_j)_{j=1}^\infty \in  B_{X(E')}}  \sum_{j=1}^\infty \left\vert \psi_j(x_j) \right\vert = \|T\| \cdot \left\Vert (x_j)_{j=1}^\infty\right\Vert_{X^{\rm dual}(E)}.
\end{align*}
Now the equality $\left\Vert\widehat{T}\right\Vert = \left\Vert T\right\Vert$ follows easily. 
\end{proof}

\begin{example}\label{exdual}\rm We give some examples of sequence classes that fit in this framework and some of its duals. We shall return to some of these examples later.

The notation $\ell_p(\cdot)^{\rm dual}(\cdot)$ is quite cumbersome, so we shall write $\ell_p^{\rm dual}(\cdot)$ instead. Accordingly, we write $(\ell_p^w)^{\rm dual}(\cdot)$, $(\ell_p^u)^{\rm dual}(\cdot)$ and so on.\\
 (a) All sequence classes in Example \ref{exsc}, but  ${\rm Rad}(\cdot)$ and ${\rm RAD}(\cdot)$, are spherically complete.\\
 (b) For any Banach space $E$ and $1 \le p < \infty$, \[\ell_p^{\rm dual}(E) = \left\{(x_j)_{j=1}^\infty\ \mathrm{in\ } E: \sum_{j=1}^\infty |\varphi_j(x_j)|< \infty  \text{~for every}\ (\varphi_j)_{j=1}^\infty\in  \ell_p(E')\right\}.\]
 To avoid unnecessary computations, we wait until Example \ref{exdual2} to establish that, as expected, $\ell_p^{\rm dual}(\cdot) = \ell_{p^*}(\cdot)$. \\
  (c) Again as expected, let us check that $\ell_1^{\rm dual}(\cdot) = \ell_\infty(\cdot)$. Given a Banach space $E$, $(x_j)_{j=1}^\infty \in \ell_1^{\rm dual}(E)$, $k \in \mathbb{N}$ and $\varphi \in B_{E'}$, it is clear that the sequence $\varphi \cdot e_k =: (\widetilde{\varphi}_j)_{j=1}^\infty$ belongs to $B_{\ell_1(E')}$. So,
  \begin{align*} \|x_k\|& = \sup_{\varphi \in B_{E'}}|\varphi(x_k)| = \sup_{ \varphi \in B_{E'}} \sum_{j=1}^\infty |\widetilde{\varphi}_j(x_j)|\leq \sup_{(\varphi_j)_{j=1}^\infty \in B_{\ell_1(E')}} \sum_{j=1}^\infty |{\varphi}_j(x_j)| = \|(x_j)_{j=1}^\infty\|_{\ell_1^{\rm dual}(E)}.
  \end{align*}
  Thus $(x_j)_{j=1}^\infty \in \ell_\infty(E)$ and the underlying norm inequality follows. The reverse inclusion/norm inequality is clear.
  \\
 (d) Let us check that, unexpectedly,  $\ell_\infty^{\rm dual}(\cdot) = \ell_1(\cdot)$. Given a Banach space $E$ and $(x_j)_{j=1}^\infty \in \ell_\infty^{\rm dual}(E)$, we have $\sum_{j=1}^\infty \varphi_j(x_j)$ convergent for every $(\varphi_j)_{j=1}^\infty \in \ell_\infty(E')$. For each $j \in \mathbb{N}$, take $\varphi_j \in E'$ such that $\varphi_j(x_j) = \|x_j\|$ and $\|\varphi_j\| = 1$. Then $(\varphi_j)_{j=1}^\infty \in \ell_\infty(E')$, so
	$$\sum_{j=1}^\infty \|x_j\| = \sum_{j=1}^\infty \varphi_j(x_j) < \infty, $$
	proving that $(x_j)_{j=1}^\infty  \in \ell_1(E)$. Conversely, if $(x_j)_{j=1}^\infty \in \ell_1(E)$, then for every $(\varphi_j)_{j=1}^\infty \in  \ell_\infty(E')$,
	\begin{align*}
	\sum_{j=1}^\infty |\varphi_j(x_j)| & \leq \sum_{j=1}^\infty \|\varphi_j\|\cdot \|x_j\| \leq \left(\sup_k \|\varphi_k\|\right) \cdot \sum_{j=1}^\infty \|x_j\|\\
	& = \|(\varphi_j)_{j=1}^\infty\|_{\ell_\infty(E')} \cdot \|(x_j)_{j=1}^\infty\|_{\ell_1(E)} < \infty.
	\end{align*}
	This proves that $\sum_{j=1}^\infty \varphi_j(x_j)$ converges for every $(\varphi_j)_{j=1}^\infty \in  \ell_\infty(E')$, that is, $(x_j)_{j=1}^\infty \in \ell_\infty^{\rm dual}(E)$. The norm equality $\|\cdot\|_{\ell_\infty^{\rm dual}(E)}= \|\cdot\|_{\ell_1(E)}$ is immediate.\\
(e) It follows immediately from the respective definitions that $(\ell_p^w)^{\rm dual}(\cdot) = \ell_{p^*}\langle \cdot \rangle$.\\
 (f) Our dual procedure gives new sequence classes. For instance, we can define the (finitely determined, spherically complete and linearly stable) sequence class $\ell_p^{\rm mid}\left\langle \cdot \right\rangle$ by
	\begin{align*}
	\ell_p^{\rm mid}\left\langle E \right\rangle &:= (\ell_p^{\rm mid})^{\rm dual}(E) \\
	& = \left\{(x_j)_{j=1}^\infty\ \mathrm{in\ } E: \sum_{j=1}^\infty |\varphi_j(x_j)|< \infty\  \text{for every}\ (\varphi_j)_{j=1}^\infty\ \mathrm{in\ } \ell_p^{\rm mid}(E')\right\}.
	\end{align*}
	It is not difficult to prove that $\ell_p\langle E \rangle \stackrel{1}{\hookrightarrow} \ell_p^{\rm mid}\left\langle E \right\rangle \stackrel{1}{\hookrightarrow} \ell_p(E)$ for any Banach space $E$ and every $1<p<\infty$. We know that $\ell_1\langle E \rangle = \ell_1(E)$ and therefore $\ell_1\langle E \rangle = \ell_1^{\rm mid}\left\langle E \right\rangle = \ell_1(E)$. 	
\end{example}

Our next aim is to prove the duality $X^{\rm dual}(E') =X(E)'$. Some preparatory work in order.

\begin{definition} \rm A sequence class $X$ is said to be {\it finitely injective} if \begin{equation}\label{eqfin}
\|(x_j)_{j=1}^k\|_{X(E)} \leq \|(i(x_j))_{j=1}^k\|_{X(F)}
\end{equation}
whenever $i \colon E \longrightarrow F$ is a metric injection, $k \in \mathbb{N}$ and $x_1, \ldots, x_k \in E$.
\end{definition}

It is clear that if $X$ is also linearly stable, then we have an equality in \eqref{eqfin}.

\begin{example}\rm
	The sequence classes $c_0(\cdot), \ell_\infty(\cdot),\ell_p(\cdot), \ell_p^u(\cdot)$ and $\ell_p^w(\cdot)$ are finitely injective. 
\end{example}

The following elementary lemma will be helpful soon.

\begin{lemma}\label{lemaiso}\rm
	Let $J\colon A \longrightarrow B$ and $I\colon B \longrightarrow A$ be maps between linear spaces such that $J$ is linear and injective and $I|_{J(A)} = J^{-1}$. Then $J$ is surjective if and only if $I$ is injective.
\end{lemma}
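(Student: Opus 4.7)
The plan is to prove both implications essentially by direct chasing of the relation $I(J(a)) = a$ for every $a \in A$, which is what the hypothesis $I|_{J(A)} = J^{-1}$ amounts to. No appeal to the Banach space structure is needed; the statement is a purely set-theoretic/linear one, so the argument should be short.

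For the forward direction, I would observe that if $J$ is surjective then $J(A) = B$, hence $I = I|_{J(A)} = J^{-1}$ on all of $B$. Since $J\colon A \to B$ is then a linear bijection, its inverse $J^{-1}$ is bijective, and in particular $I$ is injective.

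For the reverse direction, fix $b \in B$ and consider the element $I(b) \in A$. Applying $J$ gives $J(I(b)) \in J(A)$, and then applying $I$ once more I can use the hypothesis $I|_{J(A)} = J^{-1}$ to compute
\[
I(J(I(b))) = J^{-1}(J(I(b))) = I(b).
\]
Since $I$ is assumed injective, this forces $J(I(b)) = b$, showing $b \in J(A)$. As $b \in B$ was arbitrary, $J$ is surjective.

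I do not foresee any genuine obstacle: the only subtlety is being careful that $I$ is not assumed linear, so the argument must rely exclusively on the identity $I \circ J = \mathrm{id}_A$ and on pointwise injectivity of $I$, which is exactly what the above chase does.
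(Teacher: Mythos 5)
Your proof is correct: the forward direction is immediate, and the reverse direction's chase $I(J(I(b)))=I(b)$ followed by injectivity of $I$ does yield $J(I(b))=b$, using only $I|_{J(A)}=J^{-1}$ and never assuming $I$ linear. The paper states this lemma without proof as an elementary fact, and your argument is exactly the natural verification it leaves to the reader.
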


\begin{theorem} \label{dualproof}
Let $E$ be a Banach space and $X$ be a linearly stable, finitely dominated and spherically complete sequence class. Then\\
{\rm (a)} The map 
\begin{align}\label{opdual} J\colon X^{\rm dual}(E') \longrightarrow X(E)'~,~
   J\left((\varphi_j)_{j=1}^\infty\right)\left((x_j)_{j=1}^\infty\right) = \sum_{j=1}^\infty \varphi_j(x_j),
\end{align}
is a well defined, injective continuous linear operator.

Suppose that, in addition, $X$ is finitely injective. Then:\\
{\rm (b)} $J$ is an isometric isomorphism from $X^{\rm dual}(E')$ onto a complemented subspace of $X(E)'$.\\
{\rm (c)} $J$ is an isometric isomorphism from $X^{\rm dual}(E')$ onto $X(E)'$ if and only if $c_{00}(E)$ is dense in $X(E)$.
\end{theorem}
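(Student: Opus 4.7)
The overall plan is to derive (a) directly from the hypotheses and to build the isometry, complementation, and surjectivity criterion of (b)--(c) around an auxiliary contraction $I\colon X(E)'\to X^{\rm dual}(E')$ that serves as a one-sided inverse of $J$. For (a), since the canonical inclusion $E\hookrightarrow E''$ is isometric, linear stability of $X$ promotes it to an isometric embedding $X(E)\hookrightarrow X(E'')$. For $(x_j)\in X(E)$ and $(\varphi_j)\in X^{\rm dual}(E')$, viewing $(x_j)$ inside $X(E'')$ and applying the definition of $X^{\rm dual}(E')$ (together with the norm formula guaranteed by spherical completeness of $X$ via Lemma \ref{lemaserie}) gives absolute convergence of $\sum_j \varphi_j(x_j)$ with $\bigl|\sum_j \varphi_j(x_j)\bigr|\le \|(\varphi_j)\|_{X^{\rm dual}(E')}\,\|(x_j)\|_{X(E)}$, which simultaneously establishes well-definedness, linearity, continuity and $\|J\|\le 1$. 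Injectivity is immediate from testing $J((\varphi_j))$ on $x\cdot e_j\in c_{00}(E)\subseteq X(E)$.

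For the isometric-embedding half of (b), I would use spherical completeness of $X$ to absorb signs and rewrite $\|J((\varphi_j))\|_{X(E)'} = \sup_{(x_j)\in B_{X(E)}}\sum_j|\varphi_j(x_j)|$, and use finite determination of $X^{\rm dual}$ (Proposition \ref{proporp}) to reduce $\|(\varphi_j)\|_{X^{\rm dual}(E')}$ to a supremum over finite truncations. The crucial identity is, for each $k$ and each $(\varphi_1,\ldots,\varphi_k)\in (E')^k$,
\[ \sup\Bigl\{\textstyle\sum_{j=1}^k|\Phi_j(\varphi_j)|:\|(\Phi_j)_{j=1}^k\|_{X(E'')}\le 1\Bigr\}=\sup\Bigl\{\textstyle\sum_{j=1}^k|\varphi_j(x_j)|:\|(x_j)_{j=1}^k\|_{X(E)}\le 1\Bigr\}, \]
in which ``$\ge$'' is the canonical embedding and ``$\le$'' will come from the Principle of Local Reflexivity applied on $G=\operatorname{span}(\Phi_1,\ldots,\Phi_k)\subseteq E''$ and $F=\operatorname{span}(\varphi_1,\ldots,\varphi_k)\subseteq E'$, producing $T\colon G\to E$ with $\|T\|\le 1+\varepsilon$ and $\varphi_j(T\Phi_j)=\Phi_j(\varphi_j)$. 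Linear stability gives $\|\widehat T\|\le 1+\varepsilon$, finite injectivity gives $\|(\Phi_j)\|_{X(G)}=\|(\Phi_j)\|_{X(E'')}$, and the lift $(T\Phi_j)$ lies in $(1+\varepsilon)B_{X(E)}$, so letting $\varepsilon\to 0$ closes the identity.

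For complementation, define $I\colon X(E)'\to X^{\rm dual}(E')$ by $I(\phi)_j(x):=\phi(x\cdot e_j)$; an entirely parallel local-reflexivity argument, now combined with sign-absorption that converts $\sum_{j=1}^k|\Phi_j(I(\phi)_j)|$ into $\phi$ evaluated on a truncated sign-adjusted sequence of $X(E)$, together with the truncation bound $\|(\Phi_1,\ldots,\Phi_k,0,\ldots)\|_{X(E'')}\le\|(\Phi_j)\|_{X(E'')}$ that follows from finite domination of $X$, shows that $I$ is a well-defined contraction. A direct check on $x\cdot e_j$ yields $I\circ J=\mathrm{id}_{X^{\rm dual}(E')}$, so $P:=J\circ I$ is a norm-one projection of $X(E)'$ onto $J(X^{\rm dual}(E'))$, settling (b). Part (c) is then immediate from Lemma \ref{lemaiso}: $J$ is surjective iff $I$ is injective, and $I(\phi)=0$ iff $\phi|_{c_{00}(E)}=0$, which by Hahn--Banach is equivalent to $c_{00}(E)$ being norm-dense in $X(E)$.

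The main obstacle will be the ``$\le$'' direction of the norm identity above: lifting a near-optimal $(\Phi_j)\in B_{X(E'')}$ to a near-optimal $(x_j)\in B_{X(E)}$ requires simultaneous control of the duality pairings with $(\varphi_j)$ and of the $X$-norm of the lift, which is precisely what the operator form of the Principle of Local Reflexivity combined with finite injectivity is designed to furnish.
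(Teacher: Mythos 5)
Your proposal is correct and follows essentially the same route as the paper: the auxiliary operator $I(\phi)=(\phi\circ I_j)_{j=1}^\infty$, the weak principle of local reflexivity combined with finite injectivity, linear stability, sign absorption via spherical completeness and the truncation/limit step from finite domination, then $I\circ J=\mathrm{id}$, the projection $J\circ I$, Lemma \ref{lemaiso} and Hahn--Banach for (c). The only difference is that you prove the isometry of $J$ by a separate direct local-reflexivity norm identity, whereas the paper obtains it for free from $\|J\|\le 1$, $\|I\|\le 1$ and $I\circ J=\mathrm{id}$; your extra step is sound but redundant.
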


\begin{proof}
(a) Given $(\varphi_j)_{j=1}^\infty \in  X^{\rm dual}(E')$, the series $\sum_{j=1}^\infty \psi_j(\varphi_j)$ converges for every $(\psi_j)_{j=1}^\infty \in X(E'')$. As $X$ is linearly stable, considering the canonical embedding $J_E \colon E \longrightarrow E''$, for any $(x_j)_{j=1}^\infty \in X(E)$ we have $(J_E(x_j))_{j=1}^\infty \in X(E'')$ and, in particular,
\[ \sum_{j=1}^\infty \varphi_j(x_j) = \sum_{j=1}^\infty J_E(x_j)(\varphi_j) \ \mathrm{converges}. \]
So, the operator $J\left((\varphi_j)_{j=1}^\infty\right)$ is well-defined, and its linearity is obvious. Calling on the linear stability of $X$ once again, we have
\begin{align*}
\left\vert\sum_{j=1}^{n}\varphi_j(x_j)\right\vert &= \left\Vert(x_j)_{j=1}^{\infty}\right\Vert_{X(E)}\cdot \sum_{j=1}^{n}\left\vert \varphi_j\left(\frac{x_j}{\left\Vert(x_j)_{j=1}^{\infty}\right\Vert_{X(E)}}\right)\right\vert \nonumber \\
&\leq \left\Vert(x_j)_{i=1}^{\infty}\right\Vert_{X(E)} \cdot \sup_{(y_j)_{i=1}^{\infty}\in B_{X(E)}} \sum_{j=1}^{n}\left\vert\varphi_j(y_j)\right\vert  \\
&= \left\Vert(x_j)_{i=1}^{\infty}\right\Vert_{X(E)} \cdot \sup_{(y_j)_{i=1}^{\infty}\in B_{X(E)}} \sum_{j=1}^{n}\left\vert J_E(y_j)(\varphi_j)\right\vert  \\
&\le \left\Vert(x_j)_{i=1}^{\infty}\right\Vert_{X(E)}\cdot \sup_{(\psi_j)_{i=1}^{\infty}\in B_{X(E'')}} \sum_{j=1}^{n}\left\vert \psi_j(\varphi_j)\right\vert \\&= \left\Vert(x_j)_{j=1}^{\infty}\right\Vert_{X(E)}\cdot \left\Vert(\varphi_j)_{j=1}^{n}\right\Vert_{X^{\rm dual}(E')}\nonumber
\end{align*}
for all $n \in \mathbb{N}$ and $0 \neq (x_j)_{j=1}^{\infty}\in X(E)$. Since $X^{\rm dual}$ is finitely determined, it follows that $J((\varphi_j)_{j=1}^\infty)$ is continuous. Therefore the operator $J$ is well-defined, linear (obvious) and continuous with $\|J\| \le 1$. The injectivity of $J$ follows easily.

\medskip

\noindent (b) Let $I_j\colon E\longrightarrow X(E)$ the mapping given by $I_j(x)=x \cdot e_j$ and $I\colon X(E)' \longrightarrow X^{\rm dual}(E')$ the operator given by $I(\varphi)=(\varphi\circ I_j)_{j=1}^{\infty}$. Let us prove that $I$ is well-defined. Given $(\psi_j)_{j=1}^{\infty}\in X(E'')$, for $n\in\mathbb{N}$ consider $M= {\rm span}\{\psi_1,\ldots,\psi_n\}$, $N={\rm span}\{\varphi \circ I_1,\ldots,\varphi \circ I_n\}$ and the identity operator $\text{id}_{E''} \in {\cal L}(E'';E'')$. Using the Weak Principle of Local Reflexivity \cite[p.\,73]{df}, for every $\delta>0$ there is an operator $S\in\mathcal{L}(M,E)$ such that
\begin{equation*}
\left\Vert S\right\Vert\leq(1+\delta)\left\Vert\mathrm{id}_{E^{\prime\prime}}\big\rvert_{M}\right\Vert=1+\delta \mathrm{~and}$$
$$
\psi_j(\varphi \circ I_j) = (\varphi \circ I_j)(S(\psi_j)),
\end{equation*} for every $j=1,\ldots,n$. 
Taking $\lambda_j = e^{-i\theta_j}$, where $\theta_j$ is the principal argument of $(\varphi \circ I_j)(S(\psi_j))$, using the assumptions on $X$ we get
\begin{align*}
\sum_{j=1}^{n}\left\vert\psi_j(\varphi \circ I_j)\right\vert &=  \sum_{j=1}^{n}\left\vert(\varphi \circ I_j)(S(\psi_j))\right\vert
 = \sum_{j=1}^{n}(\varphi \circ I_j)(S(\psi_j))\lambda_j\\
& = \sum_{j=1}^{n}(\varphi \circ I_j)(S(\lambda_j\psi_j)) = \sum_{j=1}^{n}\varphi(S(\lambda_j\psi_j)\cdot e_j) \\
& = \varphi((S(\lambda_j\psi_j))_{j=1}^{n})  \leq \left\Vert\varphi\right\Vert \cdot \left\Vert S\right\Vert \cdot \left\Vert(\lambda_j\psi_j)_{j=1}^{n}\right\Vert_{X(M)} \\
& = \left\Vert\varphi\right\Vert \cdot \left\Vert S\right\Vert \cdot \left\Vert(\lambda_j\psi_j)_{j=1}^{n}\right\Vert_{X(E'')}  \le \left\Vert\varphi\right\Vert (1+ \delta) \left\Vert(\psi_j)_{j=1}^{n}\right\Vert_{X(E'')}.
\end{align*}
Now, taking the supremum on $n$, making $\delta \longrightarrow 0$ and using that $X$ is finitely dominated, we obtain
\begin{equation*} 
\sum_{j=1}^{\infty}\left\vert\psi_j(\varphi\circ I_j)\right\vert \leq \left\Vert\varphi\right\Vert \cdot \left\Vert(\psi_j)_{j=1}^{\infty}\right\Vert_{X(E'')}
\end{equation*}
for every $(\psi_j)_{j=1}^{\infty}\in X(E'')$, from which we conclude that $I$ is well-defined. Clearly $I$ is linear and the inequality immediately above gives
$\left\Vert I(\varphi)\right\Vert_{X^{\rm dual}(E')}\leq \left\Vert\varphi\right\Vert$,
from which the continuity of $I$ follows. From $J\circ I=\mathrm{id}_{J(X^{\rm dual}(E'))}$ and
\begin{equation*}
\left\Vert\varphi\right\Vert = \left\Vert J\circ I(\varphi)\right\Vert = \left\Vert J(I(\varphi))\right\Vert \leq \left\Vert I(\varphi)\right\Vert_{X^{\rm dual}(E')} \leq \left\Vert\varphi\right\Vert,
\end{equation*} we conclude that  $J(X^{\rm dual}(E'))$ is a complemented subspace of  $X(E)'$ isometrically isomorphic to $X^{\rm dual}(E')$.

\medskip
\noindent (c) In both implications we use the well known consequence of the Hahn-Banach Theorem that a linear subspace $M$ of a normed space $E$ is dense if and only if the only functional $\varphi \in E'$ such that $\varphi|_{M} = 0$ is $\varphi = 0$.

Suppose that $J\colon X^{\rm dual}(E') \longrightarrow X(E)'$ is surjective. So, given a functional $\varphi \neq 0$ in $X(E)'$ there is a non-zero sequence $(\varphi_j)_{j=1}^\infty \in X^{\rm dual}(E')$ such that $J((\varphi_j)_{j=1}^\infty) = \varphi$. Let $0 \neq x \in E$ and $j_0 \in \mathbb{N}$ be such that $\varphi_j(x) \neq 0$. Thus, writing $(x_k)_{k=1}^\infty = x\cdot e_j$, we have
\[ \varphi(x \cdot e_j) = \sum_{j=1}^\infty \varphi_k(x_k) = \varphi_j(x) \neq 0 ,  \]
 therefore $\varphi|_{c_{00}(E)} \neq 0$, proving that the only functional in $X(E)'$ that vanishes in $c_{00}(E)$ is $\varphi = 0$. Conversely, by Lemma \ref{lemaiso} it is enough to check that the map $I$ defined above is injective if  $c_{00}(E)$ is dense in $X(E)$. To do so, let $\varphi \in X(E)'$ be such that $I(\varphi) = 0$. Thus
\begin{align*}
(\varphi \circ I_j)_{j=1}^\infty =0  & \Longrightarrow  \varphi \circ I_j(x) = 0 {\rm ~for~all~} x \in E {\rm ~and~} j \in \mathbb{N}\\
& \Longrightarrow  \varphi(x \cdot e_j) = 0 {\rm ~for~all~} x \in E {\rm ~and~} j \in \mathbb{N} \Longrightarrow \varphi|_{c_{00}(E)} = 0.
\end{align*}
The denseness $c_{00}(E)$ in $X(E)$ implies that $\varphi = 0$.
\end{proof}

\medskip

Naturally enough, from now on a finitely dominated, linearly stable, finitely injective and spherically complete sequence class $X$ such that $c_{00}(E)$ is dense in $X(E)$ for every $E$ shall be referred to as a {\it dual-representable sequence class}.

\begin{example}\label{exdual2}\rm
	Here we show that Theorem \ref{dualproof} recovers well known dualities and provides new ones. \\
(a) For $1 < p <\infty$, the sequence class $\ell_p(\cdot)$ is dual-representable, so
\[\ell_p^{\rm dual}(E') = (\ell_p(E))' = \ell_{p^*}(E')\] isometrically for any Banach space $E$, where the first equality follows from Theorem \ref{dualproof} and the second is well known. Interchanging $p$ and $p*$ in the second equality above, applying the Hahn-Banach Theorem and bearing Example \ref{exdual}(b) in mind, we have
\[\|(x_j)_{j=1}^\infty\|_{p^*} = \sup_{(\varphi_j)_{j=1}^\infty \in  B_{\ell_p(E')}} \sum_{j=1}^\infty |\varphi_j(x_j)| = \|(x_j)_{j=1}^\infty\|_{\ell_p^{\rm dual}(E)}\]
for every $(x_j)_{j=1}^\infty \in E^{\mathbb{N}}$. This shows that, as expected,  $\ell_p^{\rm dual}(\cdot) = \ell_{p^*}(\cdot)$.\\
(b) The formula $\ell_1^{\rm dual}(\cdot) = \ell_\infty(\cdot)$ proved in Example \ref{exdual}(c) can be recovered for dual spaces: since the sequence class $\ell_1(\cdot)$ is dual-representable,
\[\ell_1^{\rm dual}(E') = (\ell_1(E))' = \ell_\infty(E')\] isometrically for any Banach space $E$, where the first equality follows from Theorem \ref{dualproof} and the second is well known.\\
\noindent (c) For $1 \le p < \infty$, the sequence class $\ell_p^u(\cdot)$ is dual-representable, so
\[(\ell_p^u)^{\rm dual}(E') = (\ell_p^u(E))' = \ell_{p^*}\left\langle E'\right\rangle = (\ell_p^w)^{\rm dual}(E')\] isometrically for any Banach space $E$, where the first equality follows from Theorem \ref{dualproof}, the second can be proved by standard arguments (see \cite{ apiola, bu2005}), and the third comes from Example \ref{exdual}(e). We do not know if $(\ell_p^u)^{\rm dual}(\cdot) = (\ell_p^w)^{\rm dual}(\cdot)$. \\
\noindent (d) The sequence class $\ell_p^w(\cdot)$,  $1 \le p < \infty$, is not dual-representable, but from Theorem \ref{dualproof}(b) and Example \ref{exdual}(e) we conclude that $\ell_{p^*}\left\langle E'\right\rangle = (\ell_p^w)^{\rm dual}(E')$ is a complemented subspace of $\ell_p^w(E)'$, for every Banach space $E$.
\end{example}

\section{Adjoints and second adjoints}

Let $T' \in {\cal L}(F';E')$ be the adjoint of the operator $T \in {\cal L}(E;F)$ and let $X,Y$ be sequence classes. The main purpose of this section is to establish when the implications below hold with the corresponding norm inequalities:
\begin{equation*} T \in {\cal L}_{X;Y}(E;F) \Longleftrightarrow T' \in {\cal L}_{Y^{\rm dual};X^{\rm dual}}(F';E'), \end{equation*}
\begin{equation}\label{fequi}  T' \in {\cal L}_{X;Y}(F';E') \Longleftrightarrow T \in {\cal L}_{Y^{\rm dual};X^{\rm dual}}(E;F) .
\end{equation}

These implications will be proved with the help of Theorem \ref{dualproof}. A consequence regarding the equivalence $T \in {\cal L}_{X;Y}(E;F) \Longleftrightarrow T'' \in {\cal L}_{X;Y}(E'';F'')$ and illustrative examples shall be provided.

Let us proceed to establish the first implications. Given a sequence class $X$, since $X^{\rm dual}$ is a sequence class itself, we can compute its dual $(X^{\rm dual})^{\rm dual}$: for a sequence $(x_j)_{j=1}^\infty$ in a Banach space $E$,
\begin{align*} (x_j)_{j=1}^\infty \in \left(X^{\rm dual}\right)^{\rm dual}(E) & \Longleftrightarrow  \sum_{j=1}^\infty x_j'(x_j)\  \text{converges~for~every~} (x_j')_{j=1}^\infty \in X^{\rm dual}(E')\\ &\Longleftrightarrow
\sum_{j=1}^\infty x_j'(x_j) \text{~converges~for~every~} (x_j')_{j=1}^\infty \subseteq E' \text{~such that} \\
&~~~~~~~\sum_{j=1}^\infty x_j''(x_j')\  \text{converges~for~every~} (x_j'')_{j=1}^\infty \in X(E'').
\end{align*}

In the same fashion of the canonical embedding $E \stackrel{1}{\hookrightarrow} E''$, we have the following:

\begin{proposition}\label{canemb}
If $X$ is a linearly stable and spherically complete sequence class, then $X(E) \stackrel{1}{\hookrightarrow} \left(X^{\rm dual}\right)^{\rm dual}(E)$ for every Banach space $E$.
\end{proposition}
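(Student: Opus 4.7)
The plan is to fix $(x_j)_{j=1}^\infty \in X(E)$ and establish both the membership $(x_j)_{j=1}^\infty \in (X^{\rm dual})^{\rm dual}(E)$ and the inequality $\|(x_j)_{j=1}^\infty\|_{(X^{\rm dual})^{\rm dual}(E)} \leq \|(x_j)_{j=1}^\infty\|_{X(E)}$ by lifting the sequence to the bidual via the canonical isometric embedding $J_E \colon E \longrightarrow E''$, in strict analogy with the classical case $E \stackrel{1}{\hookrightarrow} E''$. This is also the same device used at the very beginning of the proof of Theorem \ref{dualproof}(a), so the argument should follow essentially the same pattern.

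For the membership, I would invoke the linear stability of $X$ to conclude that $(J_E(x_j))_{j=1}^\infty \in X(E'')$ with $\|(J_E(x_j))_{j=1}^\infty\|_{X(E'')} = \|(x_j)_{j=1}^\infty\|_{X(E)}$. Then, for an arbitrary $(\varphi_j)_{j=1}^\infty \in X^{\rm dual}(E')$, the defining property of $X^{\rm dual}$ applied to the Banach space $E'$ says precisely that $\sum_{j=1}^\infty \psi_j(\varphi_j)$ converges for every $(\psi_j)_{j=1}^\infty \in X((E')') = X(E'')$. Plugging in $\psi_j = J_E(x_j)$ and using the identification $J_E(x_j)(\varphi_j) = \varphi_j(x_j)$ immediately yields the convergence of $\sum_{j=1}^\infty \varphi_j(x_j)$, so $(x_j)_{j=1}^\infty \in (X^{\rm dual})^{\rm dual}(E)$.

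For the norm inequality, I would first appeal to the spherical completeness of $X$ (together with Lemma \ref{lemaserie}) to rewrite
\[
\|(\varphi_j)_{j=1}^\infty\|_{X^{\rm dual}(E')} = \sup_{(\psi_j)_{j=1}^\infty \in B_{X(E'')}} \sum_{j=1}^\infty |\psi_j(\varphi_j)|.
\]
Testing this supremum against the normalised lift $\psi_j = J_E(x_j)/\|(x_j)_{j=1}^\infty\|_{X(E)}$ (with the case $(x_j) = 0$ trivial) and clearing the denominator gives
\[
\sum_{j=1}^\infty |\varphi_j(x_j)| \leq \|(x_j)_{j=1}^\infty\|_{X(E)} \cdot \|(\varphi_j)_{j=1}^\infty\|_{X^{\rm dual}(E')},
\]
so taking the supremum over $(\varphi_j)_{j=1}^\infty \in B_{X^{\rm dual}(E')}$ closes the proof. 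I do not foresee a real obstacle: the argument is essentially bookkeeping. The only points requiring care are that linear stability is what makes the lift to $E''$ isometric, while spherical completeness of $X$ is what lets Lemma \ref{lemaserie} convert the dual norm into a sum of absolute values, which is the form needed to produce a clean bilinear pairing estimate.
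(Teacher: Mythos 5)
Your proof is correct and follows essentially the same route as the paper: lift the sequence to $X(E'')$ via $J_E$ using linear stability, use the characterization of $X^{\rm dual}(E')$ against $X(E'')$ for membership, and test the dual norm against the normalized lifted sequence for the inequality. One small caveat: linear stability only gives $\|(J_E(x_j))_{j=1}^\infty\|_{X(E'')} \le \|(x_j)_{j=1}^\infty\|_{X(E)}$ (since $\|\widehat{J_E}\|=\|J_E\|=1$), not the isometry you assert, but this inequality is exactly what your normalization step and the final estimate require, so the argument stands.
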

\begin{proof}
Let $(x_j)_{j=1}^\infty \in X(E)$ be given. Consider $(x_j')_{j=1}^\infty \text{ in } E'$ such that $\sum\limits_{j=1}^\infty x_j''(x_j')$ converges for every $(x_j'')_{j=1}^\infty \in X(E'')$. Since $X$ is linearly stable, we have $(J_E(x_j))_{j=1}^\infty \in X(E'')$, so
$$\sum_{j=1}^\infty x_j'(x_j) = \sum_{j=1}^\infty J_E(x_j)(x_j') {\rm ~converges}, $$
proving that $(x_j)_{j=1}^\infty \in \left(X^{\rm dual}\right)^{\rm dual}(E)$. For the norm inequality, let $(x_j')_{j=1}^\infty \text{ in } E'$ be such that $\sum\limits_{j=1}^\infty |x_j''(x_j')| < \infty$ for every $(x_j'')_{j=1}^\infty \in X(E'')$ and $\sup\limits_{(x_j'')_{j=1}^\infty \in B_{X(E'')}}\sum\limits_{j=1}^\infty |x_j''(x_j')| \leq 1$. Since $(J_E(x_j))_{j=1}^\infty \in X(E'')$, we have $\displaystyle\left(\frac{J_E(x_j)}{\|(J_E(x_j))_{j=1}^\infty\|_{X(E'')}} \right)_{j=1}^\infty \in B_{X(E'')}$ and
\begin{equation}\label{eq}\sum_{j=1}^\infty \left|\frac{J_E(x_j)}{\|(J_E(x_j))_{j=1}^\infty\|_{X(E'')}}(x_j') \right| \leq  \sup\limits_{(x_j'')_{j=1}^\infty \in B_{X(E'')}}\sum\limits_{j=1}^\infty |x_j''(x_j')| \leq 1.
\end{equation}
Calling
$$C_{E''} = \left\{(x_j')_{j=1}^\infty \subseteq E' : \sum_{j=1}^\infty x_j''(x_j') {\rm ~converges~for~every~}(x_j'')_{j=1}^\infty \text{ in } E'' {\rm~such~that~} \right.$$
$$\left. \sup_{(x_j'')_{j=1}^\infty \in B_{X(E'')}} \sum_{j=1}^\infty |x_j''(x_j')|\leq  1 \right\}, $$
it follows from \eqref{eq} that
\begin{align*} \|&(x_j)_{j=1}^\infty  \|_{\left(X^{\rm dual}\right)^{\rm dual}(E)}
=\sup\left\{ \sum_{j=1}^\infty |x_j'(x_j)|: (x_j')_j \in {X^{\rm dual}(E')}, \|(x_j)_{j=1}^\infty\|_{X^{\rm dual}(E')}\leq 1\right\}\\&
= \sup\left\{ \sum_{j=1}^\infty |x_j'(x_j)|: (x_j')_{j=1}^\infty \in C_{E''}\right\} = \sup\left\{ \sum_{j=1}^\infty |J_E(x_j)(x_j')|: (x_j')_{j=1}^\infty \in C_{E''}\right\}\\
&= \|(J_E(x_j))_{j=1}^\infty\|_{X(E'')} \cdot \sup\left\{ \sum_{j=1}^\infty \left|\frac{J_E(x_j)}{ \|(J_E(x_j))_{j=1}^\infty\|_{X(E'')}}(x_j')\right|: (x_j')_{j=1}^\infty \in C_{E''}\right\}\\
& \leq  \|(J_E(x_j))_{j=1}^\infty\|_{X(E'')} \leq \|(x_j)_{j=1}^\infty\|_{X(E)}.
\end{align*}
\end{proof}

\begin{example}\label{exref1}\rm Sometimes the equality $\left(X^{\rm dual}\right)^{\rm dual} = X $ holds and sometimes it does not. On the one hand, $\ell_p(\cdot) = \left(\ell_p^{\rm dual}\right)^{\rm dual}(\cdot)$ for $1 < p < \infty$ by Example \ref{exdual2}(a). On the other hand, if $\left(X^{\rm dual}\right)^{\rm dual} = X $, then $X$ is finitely determined and spherically complete by Proposition \ref{proporp}. In particular, $\left(X^{\rm dual}\right)^{\rm dual} \neq X $ for $X= \ell_p^u(\cdot)$ and $X = {\rm Rad}(\cdot)$.
\end{example}

 The proposition and the examples above motivate the following definition:

\begin{definition}\rm Let $E$ be a Banach space. A sequence class $X$ is said to be {\it $E$-reflexive} if 	$$
	\left(X^{\rm dual}\right)^{\rm dual}(E) = X(E) \text{ and } \|(x_j)_{j=1}^\infty\|_{X(E)} = \|(x_j)_{j=1}^\infty\|_{\left(X^{\rm dual}\right)^{\rm dual}(E)},$$ for every $(x_j)_{j=1}^\infty \in X(E)$.

$X$ is {\it reflexive} if it is $E$-reflexive for every $E$, and $X$ is {\it dual-reflexive} if it is $E'$-reflexive for every $E$.
\end{definition}

\begin{example}\rm We have seen in Example \ref{exref1} that, for $1 < p < \infty$, $\ell_p(\cdot)$ is reflexive. From Example \ref{exdual}(c) and (d) we conclude that, surprisingly, $\ell_1^{\rm dual}(\cdot)$ and $\ell_\infty^{\rm dual}(\cdot)$ are reflexive. 
\end{example}

Now we can prove the main results of this section.

\begin{theorem}\label{classdual}
	Let $X$ and $Y$ be sequence classes and $T \in {\cal L}(E;F)$.\\
{\rm (a)} If $Y$ is linearly stable, finitely dominated and spherically complete, $X$ is dual-representable and $T \in {\cal L}_{X;Y}(E;F)$, then $T' \in {\cal L}_{Y^{\rm dual};X^{\rm dual}}(F';E')$ and $\|T\|_{X;Y} \ge \|T'\|_{Y^{\rm dual};X^{\rm dual}}$.\\
{\rm (b)} If $Y$ is $F$-reflexive and $T' \in {\cal L}_{Y^{\rm dual};X^{\rm dual}}(F';E')$, then $T \in {\cal L}_{X;Y}(E;F)$ and $\|T\|_{X;Y} \le \|T'\|_{Y^{\rm dual};X^{\rm dual}}$.
\end{theorem}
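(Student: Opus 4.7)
The plan for (a) is to identify $(T'\varphi_j)_{j=1}^\infty$ as the element of $X^{\rm dual}(E')$ that corresponds, via Theorem \ref{dualproof}(c), to a suitable functional on $X(E)$. Given $(\varphi_j)_{j=1}^\infty \in Y^{\rm dual}(F')$, I would define $\Phi \colon X(E) \to \mathbb{K}$ by $\Phi((x_j)_{j=1}^\infty) = \sum_{j=1}^\infty \varphi_j(T(x_j))$. The series converges because $T \in \mathcal{L}_{X;Y}(E;F)$ forces $(T(x_j))_{j=1}^\infty \in Y(F)$, and convergence of $\sum \varphi_j(y_j)$ for every $(y_j) \in Y(F)$ is obtained by pushing $(y_j)$ into $Y(F'')$ through the canonical embedding $J_F$ (which is well behaved thanks to the linear stability of $Y$) and using the defining property of $Y^{\rm dual}(F')$. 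The same route yields the bound $|\Phi((x_j))| \le \|(\varphi_j)\|_{Y^{\rm dual}(F')}\,\|T\|_{X;Y}\,\|(x_j)\|_{X(E)}$, so $\Phi \in X(E)'$ with a matching norm estimate. Since $X$ is dual-representable, Theorem \ref{dualproof}(c) supplies a unique $(z_j)_{j=1}^\infty \in X^{\rm dual}(E')$ with $J((z_j)) = \Phi$; testing on the sequences $x \cdot e_k$ forces $z_k(x) = \varphi_k(T(x)) = T'(\varphi_k)(x)$, so $z_k = T'\varphi_k$, and the isometry $J$ transports the norm bound to $\|T'\|_{Y^{\rm dual};X^{\rm dual}} \le \|T\|_{X;Y}$.

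For (b), the plan is to exploit the $F$-reflexivity of $Y$ to reduce the conclusion to a convergence statement that the hypothesis $T' \in \mathcal{L}_{Y^{\rm dual};X^{\rm dual}}$ directly feeds. Given $(x_j) \in X(E)$, $F$-reflexivity says that $(T(x_j))_{j=1}^\infty$ lies in $Y(F)$ if and only if it lies in $(Y^{\rm dual})^{\rm dual}(F)$, so I need only verify that $\sum_j \varphi_j(T(x_j))$ converges for every $(\varphi_j) \in Y^{\rm dual}(F')$. Rewriting
\[ \varphi_j(T(x_j)) = T'(\varphi_j)(x_j) = J_E(x_j)\bigl(T'(\varphi_j)\bigr) \]
and using that $(T'\varphi_j) \in X^{\rm dual}(E')$ while $(J_E(x_j)) \in X(E'')$ with $\|(J_E(x_j))\|_{X(E'')} = \|(x_j)\|_{X(E)}$ (by linear stability of $X$), the definition of the $X^{\rm dual}(E')$-norm supplies $\sum|\varphi_j(T x_j)| \le \|T'\|_{Y^{\rm dual};X^{\rm dual}}\,\|(\varphi_j)\|_{Y^{\rm dual}(F')}\,\|(x_j)\|_{X(E)}$. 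Taking the supremum over $(\varphi_j) \in B_{Y^{\rm dual}(F')}$ and invoking $F$-reflexivity to identify this supremum with $\|(T(x_j))\|_{Y(F)}$ closes both the membership claim and the norm inequality.

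The main subtlety, common to both parts, is that proving $(T'\varphi_j) \in X^{\rm dual}(E')$ straight from the definition would require convergence of $\sum \psi_j(T'\varphi_j)$ for all $(\psi_j) \in X(E'')$, whereas $T \in \mathcal{L}_{X;Y}(E;F)$ naturally gives information only for sequences in $X(E)$. Part (a) sidesteps this by routing through the identification $X^{\rm dual}(E') \cong X(E)'$ provided by Theorem \ref{dualproof}(c), where the density of $c_{00}(E)$ in $X(E)$ does the essential extension work; part (b) avoids the issue by using the hypothesis on $T'$ to produce the $X^{\rm dual}(E')$-membership directly, and then contracting against $(J_E(x_j)) \in X(E'')$ to pair it with the original sequence in $X(E)$. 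The finite injectivity and finite domination of $X$ in (a) enter only through the appeal to Theorem \ref{dualproof}(c), while the $F$-reflexivity of $Y$ in (b) is what allows the norm computation to proceed as a supremum over $B_{Y^{\rm dual}(F')}$.
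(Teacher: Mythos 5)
Your argument is correct and follows essentially the same route as the paper: part (a) is the paper's factorization $(J^X)^{-1}\circ(\widehat{T})'\circ J^Y$ written elementwise (your $\Phi$ is exactly $(\widehat{T})'(J^Y((\varphi_j)_{j=1}^\infty))$, and testing on $x\cdot e_k$ replaces the paper's direct verification that this composition equals $\widehat{T'}$), while part (b) is the paper's pairing of $(T'(\varphi_j))_{j=1}^\infty\in X^{\rm dual}(E')$ against $(J_E(x_j))_{j=1}^\infty\in X(E'')$ followed by the isometric identification $(Y^{\rm dual})^{\rm dual}(F)=Y(F)$ from $F$-reflexivity. The auxiliary properties you invoke (linear stability and spherical completeness of $X$ in (b), the norm formula for $Y^{\rm dual}$ in (a)) are the same ones the paper uses implicitly via Theorem \ref{dualproof} and Proposition \ref{canemb}.
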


\begin{proof}
(a) Since $T$ is $(X;Y)$-summing and $X$ is dual-representable, from Theorem \ref{dualproof} the composition
$$Y^{\rm dual}(F') \stackrel{J^Y}{\xrightarrow{\hspace{1.1cm}}} Y(F)' \stackrel{(\widehat{T})'}{\xrightarrow{\hspace{1.1cm}}} X(E)' \stackrel{(J^X)^{-1}}{\xrightarrow{\hspace{1.1cm}}} X^{\rm dual}(E')$$
where $J^Y$ and $J^X$ are the corresponding operators as in \eqref{opdual}, gives a well-defined, linear and continuous operator.
All we have to do is to prove that $\widehat{(T')} = (J^X)^{-1} \circ (\widehat{T})' \circ J^Y$: for $(\varphi_j)_{j=1}^\infty \in Y^{\rm dual}(F')$ and $(x_j)_{j=1}^\infty \in X(E)$,
\begin{align}\label{c1}
(\widehat{T})'(J^Y((\varphi_j)_{j=1}^\infty))((x_j)_{j=1}^\infty) & = J^Y((\varphi_j)_{j=1}^\infty)(\widehat{T}((x_j)_{j=1}^\infty))  = J^Y((\varphi_j)_{j=1}^\infty)((T(x_j))_{j=1}^\infty)\nonumber\\
& = \sum_{j=1}^\infty \varphi_j(T(x_j)) = \sum_{j=1}^\infty (\varphi_j \circ T)(x_j). \end{align}
On the other hand, for $\psi \in X(E)'$, writing $(J^X)^{-1}(\psi) = (\xi_j)_{j=1}^\infty$ we have
\begin{equation}\label{c2}
\psi((x_j)_{j=1}^\infty) = \sum_{j=1}^\infty \xi_j(x_j).
\end{equation}
So, from \eqref{c1} and \eqref{c2} it follows that
\[ \left((J^X)^{-1} \circ (\widehat{T})' \circ J^Y\right) ((\varphi_j)_{j=1}^\infty)  = (\varphi_j \circ T)_{j=1}^\infty = (T'(\varphi_j))_{j=1}^\infty = \widehat{(T')}((\varphi_j)_{j=1}^\infty),\]
proving that $T' \in {\cal L}_{Y^{\rm dual};X^{\rm dual}}(F';E')$. Moreover,
\[ \|T'\|_{Y^{\rm dual};X^{\rm dual}} = \left\|\widehat{(T')}\right\| = \left\|(J^X)^{-1} \circ (\widehat{T})' \circ J^Y \right\| \le \left\|(\widehat{T})'\right\|= \left\|\widehat{T}\right\| = \|T\|_{X;Y}. \]

\noindent (b) Let $(x_j)_{j=1}^\infty \in X(E) \stackrel{1}{\hookrightarrow} \left(X^{\rm dual}\right)^{\rm dual}(E)$ be given. Thus, the series $\sum_{j=1}^\infty \psi_j(x_j)$ converges for all $(\psi_j)_{j=1}^\infty \in X^{\rm dual}(E')$. As $T' \in {\cal L}_{Y^{\rm dual};X^{\rm dual}}(F';E')$, we have $(T'(\varphi_j))_{j=1}^\infty \in X^{\rm dual}(E')$ whenever $(\varphi_j)_{j=1}^\infty \in Y^{\rm dual}(F')$ and therefore the series
\[ \sum_{j=1}^\infty \varphi_j(T(x_j)) = \sum_{j=1}^\infty T'(\varphi_j)(x_j)   \]
converges for all $(\varphi_j)_{j=1}^\infty \in Y^{\rm dual}(F')$. This shows that $(T(x_j))_{j=1}^\infty \in (Y^{\rm dual})^{\rm dual}(F)$, hence $(T(x_j))_{j=1}^\infty \in Y(F)$ because $Y$ is $F$-reflexive. It follows that $T \in {\cal L}_{X;Y}(E;F)$. Moreover,
\begin{align*}
\|T\|_{X;Y} & = \|\widehat{T}\| = \sup_{(x_j)_{j=1}^\infty \in B_{X(E)}} \|(T(x_j))_{j=1}^\infty\|_{Y(F)} \\
& = \sup_{(x_j)_{j=1}^\infty \in B_{X(E)}} \sup_{(\varphi_j)_{j=1}^\infty \in  B_{Y^{\rm dual}(F')}} \sum_{j=1}^\infty |\varphi_j(T(x_j))| \\
& = \|T'\|_{Y^{\rm dual};X^{\rm dual}}\cdot \sup_{(x_j)_{j=1}^\infty \in B_{X(E)}} \sup_{(\varphi_j)_{j=1}^\infty \in  B_{Y^{\rm dual}(F')}} \sum_{j=1}^\infty \left|\frac{T'(\varphi_j)}{\|T'\|_{Y^{\rm dual};X^{\rm dual}}}(x_j)\right|\\
& \le \|T'\|_{Y^{\rm dual};X^{\rm dual}} \cdot \sup_{(x_j)_{j=1}^\infty \in B_{X(E)}} \sup_{(\psi_j)_{j=1}^\infty \in  B_{X^{\rm dual}(E')}} \sum_{j=1}^\infty |\psi_j(x_j)| \\
& = \|T'\|_{Y^{\rm dual};X^{\rm dual}}\cdot \sup_{(x_j)_{j=1}^\infty \in B_{X(E)}} \|(x_j)_{j=1}^\infty\|_{X(E)} = \|T'\|_{Y^{\rm dual};X^{\rm dual}}.
\end{align*}
\end{proof}

Now we establish the implications in (\ref{fequi}).

\begin{theorem}\label{classdual2}
	Let $X$ and $Y$ be sequence classes and $T \in {\cal L}(E;F)$.\\
 {\rm (a)} If $X$ is spherically complete, $Y^{\rm dual}$ is dual-representable and dual-reflexive and $T \in {\cal L}_{Y^{\rm dual};X^{\rm dual}}(E;F)$, then $T' \in {\cal L}_{X;Y}(F';E')$ and $\|T\|_{Y^{\rm dual};X^{\rm dual}} \ge \|T'\|_{X;Y}$.\\
 {\rm (b)} If $X$ and $Y$ are spherically complete and $T' \in {\cal L}_{X;Y}(F';E')$, then $T \in {\cal L}_{Y^{\rm dual};X^{\rm dual}}(E;F)$ and $\|T\|_{Y^{\rm dual};X^{\rm dual}} \le \|T'\|_{X;Y}$.
\end{theorem}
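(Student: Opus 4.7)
The proof splits into two parts; (b) is a short direct computation, while (a) requires a Local Reflexivity argument closely patterned on the proof of Theorem~\ref{dualproof}(b). For (b), I would take $(x_j) \in Y^{\rm dual}(E)$ and, for any $(\varphi_j) \in X(F')$, use the identity $\varphi_j(T(x_j)) = T'(\varphi_j)(x_j)$ combined with $(T'(\varphi_j)) \in Y(E')$ and the definition of the $Y^{\rm dual}(E)$-norm to obtain
\[
\sum_{j=1}^\infty |\varphi_j(T(x_j))| \le \|(T'(\varphi_j))\|_{Y(E')} \cdot \|(x_j)\|_{Y^{\rm dual}(E)} \le \|T'\|_{X;Y} \cdot \|(\varphi_j)\|_{X(F')} \cdot \|(x_j)\|_{Y^{\rm dual}(E)}.
\]
Since $X$ is spherically complete, Lemma~\ref{lemaserie} converts this absolute-convergence estimate into the convergence of $\sum_j \varphi_j(T(x_j))$ required by the definition of $X^{\rm dual}(F)$, so $(T(x_j)) \in X^{\rm dual}(F)$; taking the sup over $(\varphi_j) \in B_{X(F')}$ delivers the norm inequality.

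For (a), given $(\varphi_j) \in X(F')$, I first aim to show $(T'(\varphi_j)) \in (Y^{\rm dual})^{\rm dual}(E')$ with norm at most $\|T\|_{Y^{\rm dual};X^{\rm dual}} \cdot \|(\varphi_j)\|_{X(F')}$. Fix $(w_j) \in Y^{\rm dual}(E'')$ and $n \in \mathbb{N}$, and apply the Weak Principle of Local Reflexivity (exactly as in Theorem~\ref{dualproof}(b)) to $M = \mathrm{span}\{w_1,\ldots,w_n\}$ and $N = \mathrm{span}\{T'(\varphi_1),\ldots,T'(\varphi_n)\}$ to obtain, for each $\delta > 0$, an operator $S \in {\cal L}(M;E)$ with $\|S\| \le 1+\delta$ such that $w_j(T'(\varphi_j)) = \varphi_j(T(Sw_j))$ for $j=1,\ldots,n$. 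Choosing unimodular $\lambda_j$ with $\lambda_j w_j(T'(\varphi_j)) = |w_j(T'(\varphi_j))|$ (allowed by the spherical-completeness trick of Lemma~\ref{lemaserie}) gives $\sum_{j=1}^n |w_j(T'(\varphi_j))| = \sum_{j=1}^n \varphi_j(T(S(\lambda_j w_j)))$, which is in turn bounded by
\[
\|(\varphi_j)\|_{X(F')} \cdot \|T\|_{Y^{\rm dual};X^{\rm dual}} \cdot \|(S(\lambda_j w_j))_{j=1}^n\|_{Y^{\rm dual}(E)}
\]
via the $X^{\rm dual}(F)$-norm inequality and the summing hypothesis $T \in {\cal L}_{Y^{\rm dual};X^{\rm dual}}$. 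The linear stability, finite injectivity and spherical completeness of $Y^{\rm dual}$ (all supplied by dual-representability) reduce the last factor to $(1+\delta) \|(w_j)_{j=1}^n\|_{Y^{\rm dual}(E'')}$, and finite determinism (Proposition~\ref{proporp}) dominates this by $\|(w_j)\|_{Y^{\rm dual}(E'')}$. Letting $\delta \to 0$ and $n \to \infty$ yields the desired bound, so $(T'(\varphi_j)) \in (Y^{\rm dual})^{\rm dual}(E')$ with the claimed norm estimate.

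To finish, the dual-reflexivity hypothesis on $Y^{\rm dual}$ provides the isometric identification of $(Y^{\rm dual})^{\rm dual}(E')$ with $Y(E')$, so the membership and norm bound transfer, giving $T' \in {\cal L}_{X;Y}(F';E')$ together with $\|T'\|_{X;Y} \le \|T\|_{Y^{\rm dual};X^{\rm dual}}$. The main obstacle is coordinating all the norm estimates in the Local Reflexivity step of (a): the interplay between $Y^{\rm dual}(M)$ and $Y^{\rm dual}(E'')$ via finite injectivity, the spherical-completeness manoeuvre absorbing the $\lambda_j$'s, and the passage from finite-section to infinite-sequence norms via finite determinism must all be chained in the right order, following the template laid down in Theorem~\ref{dualproof}(b).
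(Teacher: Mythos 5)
Your proposal is correct, and the two parts sit differently relative to the paper. Part (b) is essentially the paper's argument: from $\varphi_j(T(x_j))=T'(\varphi_j)(x_j)$, the hypothesis $(T'(\varphi_j))_{j=1}^\infty\in Y(E')$ and the definition of $Y^{\rm dual}(E)$ one gets $(T(x_j))_{j=1}^\infty\in X^{\rm dual}(F)$, and your explicit chain of inequalities just makes the norm bound immediate (the appeal to Lemma \ref{lemaserie} there is superfluous, since absolute convergence of a scalar series already gives convergence, but harmless). Part (a), however, takes a genuinely different route. The paper argues softly: it introduces $u\colon X(F')\to X^{\rm dual}(F)'$, applies Theorem \ref{dualproof} to the dual-representable class $Y^{\rm dual}$ to invert $J^{Y^{\rm dual}}\colon (Y^{\rm dual})^{\rm dual}(E')\to Y^{\rm dual}(E)'$, and verifies $\widehat{T'}=\left(J^{Y^{\rm dual}}\right)^{-1}\circ(\widehat T)'\circ u$, so all the local reflexivity work stays hidden inside Theorem \ref{dualproof}(b). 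You instead inline that work: you test $(T'(\varphi_j))_{j=1}^\infty$ directly against $(w_j)_{j=1}^\infty\in Y^{\rm dual}(E'')$, run the Weak Principle of Local Reflexivity, the unimodular-scalar trick, and the linear stability, finite injectivity and spherical completeness of $Y^{\rm dual}$ to land in $(Y^{\rm dual})^{\rm dual}(E')$ with the bound $\|T\|_{Y^{\rm dual};X^{\rm dual}}\cdot\|(\varphi_j)_{j=1}^\infty\|_{X(F')}$; the chaining of estimates you describe (linear stability for $S$, finite injectivity to pass from $Y^{\rm dual}(M)$ to $Y^{\rm dual}(E'')$, spherical completeness to absorb the $\lambda_j$, truncation monotonicity to reach the full norm) is in the right order and each step is justified by the dual-representability of $Y^{\rm dual}$. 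The paper's composition is shorter once Theorem \ref{dualproof} is available and displays $\widehat{T'}$ as a conjugate of $(\widehat T)'$; your direct estimate repeats the local reflexivity machinery but, as a small bonus, never needs the surjectivity of $J^{Y^{\rm dual}}$, hence neither the density of $c_{00}$ in $Y^{\rm dual}(E)$ nor the finite domination contained in dual-representability. One caveat: your last step reads the hypothesis that $Y^{\rm dual}$ is dual-reflexive as giving $(Y^{\rm dual})^{\rm dual}(E')=Y(E')$ isometrically, whereas the definition applied literally to $Y^{\rm dual}$ would give $((Y^{\rm dual})^{\rm dual})^{\rm dual}(E')=Y^{\rm dual}(E')$; this is exactly how the paper's own proof uses the hypothesis, so you are consistent with it, but be aware the identification you (and the paper) invoke is really the $E'$-reflexivity of $Y$.
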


\begin{proof}
(a) The operator
$$u \colon X(F') \longrightarrow X^{\rm dual}(F)'~,~u\left((\varphi_j)_{j=1}^\infty \right)\left((y_j)_{j=1}^\infty \right) = \sum_{j=1}^\infty\varphi_j(y_j),  $$
is well defined by the definition of $X^{\rm dual}(F)$, its linearity is obvious and its continuity follows from the definition of $\|\cdot\|_{X^{\rm dual}(F)}$. Since $T'$ is $(X;Y)$-summing and $Y^{\rm dual}$ is dual-representable and dual-reflexive, from Theorem \ref{dualproof} the composition
$$ X(F') \stackrel{u}{\xrightarrow{\hspace{1.3cm}}}  X^{\rm dual}(F)' \stackrel{(\widehat{T})'}{\xrightarrow{\hspace{1.3cm}} } Y^{\rm dual}(E)'\stackrel{\left(J^{Y^{\rm dual}}\right)^{-1}}{\xrightarrow{\hspace{1.3cm}} }  \left(Y^{\rm dual}\right)^{\rm dual}(E')= Y(E')$$
where $J^{Y^{\rm dual}}$ is the corresponding operator as in \eqref{opdual}, gives a well-defined, linear and continuous operator.  Reasoning as in the proof of Theorem \ref{classdual}(a) we get $\widehat{(T')}=\left(J^{Y^{\rm dual}}\right)^{-1} \circ (\widehat{T})' \circ u $, proving that $T' \in {\cal L}_{X;Y}(F';E')$. The norm inequality is straightforward.

\medskip

\noindent (b) Let $(x_j)_{j=1}^\infty \in Y^{\rm dual}(E)$ be given. Thus, the series $\sum_{j=1}^\infty \psi_j(x_j)$ converges for all $(\psi_j)_{j=1}^\infty \in Y(E')$. As $T' \in {\cal L}_{X;Y}(F';E')$, we have $(T'(\varphi_j))_{j=1}^\infty \in Y(E')$ whenever $(\varphi_j)_{j=1}^\infty \in X(F')$ and therefore the series
\[ \sum_{j=1}^\infty \varphi_j(T(x_j)) = \sum_{j=1}^\infty T'(\varphi_j)(x_j)   \]
converges for every $(\varphi_j)_{j=1}^\infty \in X(F')$. This means  that $(T(x_j))_{j=1}^\infty \in X^{\rm dual}(F)$ and so $T \in {\cal L}_{Y^{\rm dual};X^{\rm dual}}(E;F)$. The norm inequality follows as in the proof of Theorem \ref{classdual}(b).
\end{proof}

Let us see that our results recover some well known results and provide new ones.

\begin{example}\rm (a) Take $X = \ell_p^u(\cdot)$ and $Y = \ell_q(\cdot)$, $1 \le p \le q < \infty$. If an operator $T$ is absolutely $(q;p)$-summing, we have by Theorem \ref{classdual}(a) that $T'$ is $(\ell_{q^*}(\cdot);(\ell_p^u)^{\rm dual}(\cdot))$-summing, that is, $T'$ is $(\ell_{q^*}(\cdot);\ell_{p^*}\langle \cdot \rangle)$-summing by Example \ref{exdual2}(c) (remember that $T'$ acts between dual spaces). This recovers a classical result due to Cohen \cite{cohen73}. \\
(b) By Theorem \ref{classdual2}(a), if $ T \in {\cal L}_{\ell_{q^*}(\cdot);(\ell_p^u)^{\rm dual}(\cdot)}(E;F)$, then $T'$ is absolutely $(q;p)$-summing. If $F$ is a dual space, this means that if $ T \in {\cal L}_{\ell_{q^*}(\cdot);\ell_{p^*}\langle\cdot\rangle}(E;F)$, then $T'$ is absolutely $(q;p)$-summing, which recovers another classical result from  \cite{cohen73}.\\
(c) If $T$ is such that $T'$ is absolutely $(q;p)$-summing, then $T$ is $(\ell_{q^*}(\cdot); (\ell_p^u)^{\rm dual}(\cdot))$-summing by Theorem \ref{classdual2}(b). If the target space is a dual space, this means that if $T'$ is absolutely $(q;p)$-summing then $T$ is $(\ell_{q^*}(\cdot); \ell_{p^*}\langle\cdot\rangle)$-summing, recovering a third classical result from   \cite{cohen73}. \\
(d) As to new implications, we give an illustrative example. According to \cite{botelhocampossantos}, an operator is absolutely mid-$(p,q)$-summing, $1 \leq p < q < \infty$, if it is $(\ell_q^{\rm mid}(\cdot); \ell_p(\cdot))$-summing. By Theorem \ref{classdual2}(b), if the adjoint $T'$ of an operator $T$ is $(\ell_{p}^{\rm mid}(\cdot);\ell_q(\cdot))$-summing, then $T$ is $(\ell_{q^*}(\cdot); \ell_p^{\rm mid}\left\langle\cdot\right\rangle)$-summing.
\end{example}

From now on $X$ and $Y$ are sequence classes such that ${\cal L}_{X;Y}$ is a Banach operator ideal (cf. \cite[Theorem 3.6]{botelhocampos}). The following lemma is a straightforward consequence of \cite[Proposition 2.4 and Corollary 2.6]{botelhocampos}.

\begin{lemma}\label{llema} Let $X$ and $Y$ be finitely dominated sequence classes with $Y$ finitely injective. Then the Banach operator ideal ${\cal L}_{X;Y}$ is injective.
\end{lemma}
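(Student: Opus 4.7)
The plan is to verify injectivity directly from the definition: given a metric injection $i\colon F \longrightarrow G$, an operator $T \in \mathcal{L}(E;F)$, and the assumption $i\circ T \in \mathcal{L}_{X;Y}(E;G)$, I must produce $T \in \mathcal{L}_{X;Y}(E;F)$ with matching norm $\|T\|_{X;Y} = \|i\circ T\|_{X;Y}$. The strategy is to exploit the finite domination of $Y$ to reduce membership in $Y(F)$ to finite-segment computations, where the finite injectivity of $Y$ applies verbatim.

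First I would fix, by finite domination, a finitely determined sequence class $W$ with $Y \stackrel{1}{\hookrightarrow} W$ as a closed subspace, witnessing either the $<$-condition (I) or the $\prec$-condition (II) of the definition. For $(x_j)_{j=1}^\infty \in X(E)$, set $y_j := T(x_j)$, so that by hypothesis $(i(y_j))_{j=1}^\infty \in Y(G) \subseteq W(G)$. For every $k \in \mathbb{N}$, the isometric inclusion $Y \stackrel{1}{\hookrightarrow} W$ combined with finite injectivity of $Y$ yields
\[
\|(y_j)_{j=1}^k\|_{W(F)} = \|(y_j)_{j=1}^k\|_{Y(F)} \le \|(i(y_j))_{j=1}^k\|_{Y(G)} = \|(i(y_j))_{j=1}^k\|_{W(G)}.
\]
Taking the supremum in $k$ and using the finite determination of $W$ on both sides gives $(y_j)_j \in W(F)$ together with the estimate $\|(y_j)_j\|_{W(F)} \le \|(i(y_j))_j\|_{W(G)}$.

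Next I would verify the additional tail (resp.\ Cauchy) condition that singles out $Y(F)$ inside $W(F)$. In case (I), applying the same chain of inequalities to the truncations $(y_j)_{j=k}^l$ and passing to $l\to\infty$ via finite determination of $W$ produces $\|(y_j)_{j=k}^\infty\|_{W(F)} \le \|(i(y_j))_{j=k}^\infty\|_{W(G)}$, which tends to zero as $k \to \infty$ because $(i(y_j))_j \in Y(G)$. Case (II) proceeds identically with double indices. In either case $(y_j)_j \in Y(F)$, so $T$ is $(X;Y)$-summing; the isometric inclusion $Y \stackrel{1}{\hookrightarrow} W$ then upgrades the earlier estimate to $\|(y_j)_j\|_{Y(F)} \le \|(i(y_j))_j\|_{Y(G)}$. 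Taking the supremum over $(x_j)_j \in B_{X(E)}$ delivers $\|T\|_{X;Y} \le \|i\circ T\|_{X;Y}$, and the reverse inequality is automatic from $\|i\| = 1$.

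The only genuine subtlety I anticipate is the bookkeeping between three distinct ``finite'' features: the isometric inclusion $Y \stackrel{1}{\hookrightarrow} W$ (used to equate $Y$- and $W$-norms on finitely supported sequences), the finite injectivity of $Y$ (stated only for finite sequences), and the finite determination of $W$ (which turns bounded suprema of finite-segment norms into genuine sequence-norm information). Precisely this interplay is what Proposition 2.4 and Corollary 2.6 of \cite{botelhocampos} are designed to streamline, so once those results are invoked the argument above becomes essentially routine, as advertised in the statement of the lemma.
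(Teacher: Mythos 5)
Your argument is correct and is essentially the paper's proof made explicit: the paper simply invokes \cite[Proposition 2.4 and Corollary 2.6]{botelhocampos}, whose content---reducing $(X;Y)$-summability to finite sections via finite determination/domination, where the finite injectivity of $Y$ supplies the key inequality---is exactly what you reprove by hand using the dominating finitely determined class $W$. The norm identity $\|T\|_{X;Y}=\|i\circ T\|_{X;Y}$ then follows as you indicate, the reverse inequality coming from the ideal property together with $\|i\|=1$.
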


\begin{corollary}\label{bidual} Let $X$ and $Y$ be sequence classes with $X$ dual-representable, $Y$ spherically complete and dual-reflexive such that $Y^{\rm dual}$ is dual-representable. For an operator $T \colon E \longrightarrow F$, $T \in {\cal L}_{X;Y}(E;F)$ if and only if $T'' \in {\cal L}_{X;Y}(E'';F'')$. In this case, $\|T\|_{X;Y} = \|T''\|_{X;Y}$.
\end{corollary}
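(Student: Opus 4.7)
The plan is to package the two implications of the corollary into chains built from Theorems~\ref{classdual} and \ref{classdual2}, and to finish the converse with an injectivity argument.

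For the direct implication $T\in\mathcal{L}_{X;Y}(E;F)\Rightarrow T''\in\mathcal{L}_{X;Y}(E'';F'')$, I would first apply Theorem~\ref{classdual}(a) to obtain $T'\in\mathcal{L}_{Y^{\rm dual};X^{\rm dual}}(F';E')$ with $\|T'\|_{Y^{\rm dual};X^{\rm dual}}\le\|T\|_{X;Y}$, and then apply Theorem~\ref{classdual2}(a) with $T'$ playing the role of $T$ to push up to $T''=(T')'\in\mathcal{L}_{X;Y}(E'';F'')$ with $\|T''\|_{X;Y}\le\|T'\|_{Y^{\rm dual};X^{\rm dual}}$. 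The second step requires $Y^{\rm dual}$ itself to be dual-reflexive; I would derive this from the dual-reflexivity of $Y$ by observing that for any Banach space $G$, the space $G''$ is a dual, so $(Y^{\rm dual})^{\rm dual}(G'')=Y(G'')$, and plugging this into the definition of $((Y^{\rm dual})^{\rm dual})^{\rm dual}(G')$ collapses it to $Y^{\rm dual}(G')$.

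For the converse I would exploit the canonical factorization $J_F\circ T=T''\circ J_E$, where $J_E\colon E\hookrightarrow E''$ and $J_F\colon F\hookrightarrow F''$ are the canonical isometric embeddings. Assuming $T''\in\mathcal{L}_{X;Y}(E'';F'')$, the Banach operator ideal property of $\mathcal{L}_{X;Y}$ gives $J_F\circ T=T''\circ J_E\in\mathcal{L}_{X;Y}(E;F'')$ with norm at most $\|T''\|_{X;Y}$. By Lemma~\ref{llema}, the ideal $\mathcal{L}_{X;Y}$ is injective---$X$ is finitely dominated (being dual-representable) and the finite injectivity of $Y$ needed on the dual space $F''$ is inherited from the identification $Y(F'')=(Y^{\rm dual})^{\rm dual}(F'')$, whose finite injectivity is a consequence of the dual-representability of $Y^{\rm dual}$. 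Since $J_F$ is a metric injection, injectivity forces $T\in\mathcal{L}_{X;Y}(E;F)$ with $\|T\|_{X;Y}\le\|J_F\circ T\|_{X;Y}\le\|T''\|_{X;Y}$. Combined with the direct inequality, this yields the norm equality $\|T\|_{X;Y}=\|T''\|_{X;Y}$.

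The main obstacle I anticipate is verifying that the hypotheses of Theorem~\ref{classdual}(a) and of Lemma~\ref{llema} really are in force from the corollary's compact assumption list. Dual-reflexivity of $Y$ is the key engine: it transports the robust properties (finite determination, linear stability, finite injectivity) enjoyed by $Y^{\rm dual}$ and its double dual across the equality $(Y^{\rm dual})^{\rm dual}=Y$ on all dual spaces---which is precisely where the operators $T'$ and $T''$ and their sequence domains live.
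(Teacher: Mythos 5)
Your overall architecture coincides with the paper's: the forward direction by dualizing twice (Theorem~\ref{classdual}(a) followed by Theorem~\ref{classdual2}(a)), the converse by the factorization $J_F\circ T=T''\circ J_E$ together with injectivity of ${\cal L}_{X;Y}$ via Lemma~\ref{llema}. In the forward direction your variant is correct, and in one respect tidier than the paper's: your observation that dual-reflexivity of $Y$ forces dual-reflexivity of $Y^{\rm dual}$ (because $((Y^{\rm dual})^{\rm dual})^{\rm dual}(G')$ is defined through $(Y^{\rm dual})^{\rm dual}(G'')=Y(G'')$, and $G''$ is a dual space) is valid, and it lets you land directly in ${\cal L}_{X;Y}(E'';F'')$, whereas the paper first lands in ${\cal L}_{(X^{\rm dual})^{\rm dual};(Y^{\rm dual})^{\rm dual}}(E'';F'')$ and then uses Proposition~\ref{canemb} combined with the dual-reflexivity of $Y$.

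The genuine gap is in your justification of the hypotheses of Lemma~\ref{llema} in the converse. The instance of finite injectivity of $Y$ that is actually used is the inequality $\|(y_j)_{j=1}^k\|_{Y(F)}\le\|(J_F(y_j))_{j=1}^k\|_{Y(F'')}$ for the metric injection $J_F\colon F\to F''$; it compares a norm on $Y(F)$ with one on $Y(F'')$. Since $F$ is in general not a dual space, neither the dual-reflexivity of $Y$ nor the dual-representability of $Y^{\rm dual}$ gives any handle on $\|\cdot\|_{Y(F)}$: the identification $(Y^{\rm dual})^{\rm dual}(F'')=Y(F'')$ only controls the codomain side, and on the domain side Proposition~\ref{canemb} yields $\|\cdot\|_{(Y^{\rm dual})^{\rm dual}(F)}\le\|\cdot\|_{Y(F)}$, which points the wrong way (equality would require $Y$ to be $F$-reflexive, not merely dual-reflexive). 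Moreover, the claim that $(Y^{\rm dual})^{\rm dual}$ is finitely injective ``as a consequence of the dual-representability of $Y^{\rm dual}$'' is unsubstantiated: finite injectivity of a dual class $Z^{\rm dual}$ amounts to extending finite families $(\varphi_j)_{j=1}^k$ of functionals through a metric injection with control of the $Z$-norm of the extensions, an extra property not contained in dual-representability of $Z$. You also leave unaddressed the hypothesis of Lemma~\ref{llema} that $Y$ be finitely dominated. The paper itself simply invokes Lemma~\ref{llema}, taking these properties of $Y$ as part of its standing framework rather than deriving them from the corollary's compact list; the correct move is to assume or quote them, since the derivation you propose is the step that fails.
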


\begin{proof} Let $T \in {\cal L}_{X;Y}(E;F)$. Then $T' \in {\cal L}_{Y^{\rm dual}; X^{\rm dual}}(F';E')$ by Theorem \ref{classdual}(a), so  $T''\in {\cal L}_{(X^{\rm dual)^{\rm dual}}; (Y^{\rm dual)^{\rm dual}}}(E'';F'')$ by Theorem \ref{classdual2}(a). It follows that $T'' \in {\cal L}_{X;Y}(E'';F'')$ by Propositon \ref{canemb} combined with the dual-reflexivity of $Y$. The corresponding norm inequality follows accordingly. The reverse implication/norm inequality holds for injective operator ideals in general, so Lemma \ref{llema} completes the proof.
\end{proof}

\begin{remark}\rm Of course, the corollary above could be pursued searching conditions under which the ideal ${\cal L}_{X;Y}$ is maximal. But, in this case, we would not have the relationships we established for $T$ and $T'$.
\end{remark}

\begin{example}\rm Taking $1 \leq p < \infty$, $X = \ell_p^u(\cdot)$ and $Y = \ell_p(\cdot)$, Corollary \ref{bidual} recovers the following classical equivalence: an operator is absolutely $p$-summing if and only if its second adjoint is absolutely $p$-summing \cite[Proposition 2.19]{djt}.
\end{example}

\bigskip

\noindent Faculdade de Matem\'atica~~~~~~~~~~~~~~~~~~~~~~Departamento de Ci\^{e}ncias Exatas\\
Universidade Federal de Uberl\^andia~~~~~~~~ Universidade Federal da Para\'iba\\
38.400-902 -- Uberl\^andia -- Brazil~~~~~~~~~~~~ 58.297-000 -- Rio Tinto -- Brazil\\
e-mail: botelho@ufu.br \hspace*{5,7cm} and

\noindent \hspace*{7,7cm}Departamento de Matem\'atica\\
\hspace*{7,7cm}Universidade Federal da Para\'iba\\
\hspace*{7,7cm}58.051-900 -- Jo\~ao Pessoa -- Brazil

\noindent \hspace*{7,7cm}e-mails: jamilson@dcx.ufpb.br,\\
\hspace*{9,3cm}jamilsonrc@gmail.com


\begin{thebibliography}{99}\small

\bibitem{nacib} N. Albuquerque, F. Bayart, D. Pellegrino and J. B. Seoane-Sepúlveda, {\it Sharp generalizations of the multilinear Bohnenblust-Hille inequality}, J. Funct. Anal. {\bf 266} (2014), 3726--3740.

\vspace*{-0.4em}
\bibitem{nacib1} N. Albuquerque, F. Bayart, D. Pellegrino and  J.B. Seoane-Sepúlveda, {\it Optimal Hardy-Littlewood type inequalities for polynomials and multilinear operators}, Isr. J. Math. {\bf 211} (1) (2016), 197--220.

\vspace*{-0.4em}	
\bibitem{angulo} J. C. Angulo-López and M. Fernández-Unzueta, {\it Lipschitz p-summing
	multilinear operators}, J. Funct. Anal. {\bf 279} (2020), no. 4, 108572, 20 pp.

\vspace*{-0.4em}
\bibitem{apiola} H. Apiola, {\it Duality between spaces of $p$-summable sequences, $(p,q)$-summing operators and characterizations of nuclearity}, Math. Ann. \textbf{219} (1976), 53--64.

\vspace*{-0.4em}
\bibitem {bayart} F. Bayart, {\it Multiple summing maps: coordinatewise summability, inclusion theorems and p-Sidon sets}, J. Funct. Anal. {\bf 274} (4) (2018), 1129--1154.

\vspace*{-0.4em}
\bibitem {bayart1} F. Bayart, D. Pellegrino and P. Rueda, {\it On coincidence
results for summing multilinear operators: interpolation, $\ell_1$-spaces and
cotype} Collect. Math. {\bf 71} (2020), no. 2, 301--318.

\vspace*{-0.4em}
\bibitem{BotelhoBlasco} O. Blasco, G. Botelho, D. Pellegrino and P. Rueda, {\it Summability of multilinear mappings: Littlewood, Orlicz and beyond}, Monatsh. Math. {\bf 163} (2011), 131--147.

\vspace*{-0.4em}
\bibitem{blascoetal} O. Blasco, J. Fourie and I. Schoeman, {\it On operator valued sequences of multipliers and $R$-boundedness}, J. Math. Anal. Appl. {\bf 328} (2007), 7--23.

\vspace*{-0.4em}
\bibitem {botelhocampos} G. Botelho and J. R. Campos, {\it On the transformation of vector-valued sequences by multilinear operators},  Monatsh. Math. {\bf 183} (2017), 415--435.

\vspace*{-0.4em}
\bibitem {botelhocampossantos} G. Botelho, J. R. Campos and J. Santos, {\it Operator ideals related to absolutely summing and Cohen strongly summing operators}, Pacific J. Math. {\bf 287} (2017), no. 1, 1--17.

\vspace*{-0.4em}
\bibitem {botelhofreitas} G. Botelho and D. Freitas, {\it Summing multilinear operators by blocks: The isotropic and anisotropic cases}, J. Math. Anal.  Appl. {\bf 490} (2020), no. 1, 124203, 21pp.

\vspace*{-0.4em}
\bibitem {bu2005} Q. Bu and G. Emmanuele, {\it The projetive and injetive tensor products of $L^p[0,1]$ and $X$ being Grothendieck spaces}, Rocky Mountain J. Math. {\bf 35} (2005), No. 3, 713--726.

\vspace*{-0.4em}
\bibitem {cohen73} J. S. Cohen, {\it Absolutely $p$-summing, $p$-nuclear operators and their conjugates}, Math. Ann. \textbf{201} (1973), 177--200.

\vspace*{-0.4em}
\bibitem{df} A. Defant and K. Floret, {\it Tensor Norms and Operator Ideals}, North-Holland, 1993.

\vspace*{-0.4em}
\bibitem{djt} J. Diestel, H. Jarchow and A. Tonge, {\it Absolutely Summing Operators}, Cambridge University Press, 1995.

\vspace*{-0.4em}
\bibitem {grothen56} A. Grothendieck, {\it Sur certaines classes de suites dans les espaces de Banach et le th\'{e}or\`{e}me de Dvoretzky-Rogers}, Boletim da Sociedade Matem\'{a}tica de S\~{a}o Paulo \textbf{8} (1956), 81--110.

\vspace*{-0.4em}
\bibitem{joilsonFab} J. Ribeiro and F. Santos, {\it Generalized multiple summing multilinear operators on Banach spaces}, Mediterr. J. Math. {\bf 16}  (2019), Paper no. 108, 20 pp.

\vspace*{-0.4em}
\bibitem{joilsonFab2} J. Ribeiro and F. Santos, {\it Absolutely $\gamma$-summing polynomials and the notion of coherence and compatibility}, Methods Funct. Anal. Topology, to appear.

\vspace*{-0.4em}
\bibitem{diana} D. M. Serrano-Rodr\'iguez, {\it Absolutely $\gamma$-summing multilinear operators}, Linear Algebra Appl. {\bf 439} (2013), 4110--4118.

\vspace*{-0.4em}
\bibitem{pietschlivro} A. Pietsch, {\it Operator Ideals}, North-Holland, 1980.

\bibitem{popa} D. Popa, {\it Characterizations of new Cohen summing bilinear operators},
Quaest. Math. {\bf 41} (2018), no. 5, 683--692.

\end{thebibliography}
\end{document}